\documentclass[10pt,twoside]{amsart}
\usepackage{mathtools,amsmath,amssymb}
\usepackage{mathrsfs}

\usepackage[a4paper,top=1in, bottom=1in, left=1in, right=1in]{geometry} 
\usepackage{graphicx} 
\usepackage[colorlinks=true,linkcolor=blue,citecolor=black]{hyperref} 
\usepackage{cleveref}
\usepackage[backend=biber,backref=true,url=false,maxnames=9,isbn=false,sorting=nyt,style=numeric-comp]{biblatex} 
\renewbibmacro*{pageref}{%
  \iflistundef{pageref}
    {}
    {\addspace
     \mkbibbrackets{\printlist[pageref][-\value{listtotal}]{pageref}}}} 
\usepackage{lipsum}
\usepackage{adjustbox} 
\usepackage{tikz-cd}
\usepackage{mathptmx}
\usepackage{quiver}
\usepackage{float}
\usepackage{amssymb}
\usepackage[english]{babel}
\usepackage{amsthm}
\usepackage{amsmath}
\usepackage{mathtools}
\usepackage{soul}
\usepackage{nicematrix}
\usepackage{array}   
\usepackage{booktabs}
\usepackage{float}
\usepackage{orcidlink}
\usepackage[toc,page]{appendix}
\usepackage{caption}
\usepackage{mathrsfs}     
\makeatletter
\renewcommand*\env@matrix[1][*\c@MaxMatrixCols c]{%
  \hskip -\arraycolsep
  \let\@ifnextchar\new@ifnextchar
  \array{#1}}
\makeatother

\usepackage{makecell, caption, chngcntr} %

\usepackage{siunitx} %
\counterwithin{table}{section}

\newcommand{\GL}{\operatorname{GL}}

\newcommand{\PSL}{\operatorname{PSL}}

\newcommand{\SL}{\operatorname{SL}}
\newcommand{\Q}{\mathbb Q}
\newcommand{\Gal}{\mathrm{Gal}}

\newcommand{\Z}{\mathbb Z}
\newcommand{\irr}{\mathrm{Irr}}
\newcommand{\Irr}{\mathrm{Irr}}

\newcommand{\M}{\mathrm{M}}
\newcommand{\cd}{\mathrm{cd}}
\newcommand{\lin}{\mathrm{lin}}
\newcommand{\nl}{\mathrm{nl}}

\usepackage{thmtools, thm-restate}
\declaretheorem{theorem}
\newtheorem{lemma}{Lemma}[section]

\newtheorem{corollary}[lemma]{Corollary}
\theoremstyle{definition} 

\newtheorem{example}[lemma]{Example}
\newtheorem{remark}[lemma]{Remark}

\title[On rational representations and rational group algebra of $\operatorname{GL}_2(q)$]{On rational representations and rational group algebra of $\operatorname{GL}_2(q)$}
\date{\today}
\author[R.~K.~Choudhary]{Ram Karan Choudhary\orcidlink{0009-0003-7688-6397}}
\email[(Choudhary)]{ramkchoudhary1997@gmail.com, ram.choudhary@iiserpune.ac.in}
\address{Indian Institute of Science Education and Research Pune, Dr. Homi Bhabha Road, Pashan, Pune 411008, India}
\author[S.~K.~Prajapati]{Sunil Kumar Prajapati}
\email[(Prajapati)]{skprajapati@iitbbs.ac.in}
\address{Indian Institute of Technology, Bhubaneswar, Arugul Campus, Jatni, Khurda-752050, India}
\thanks{The first author is supported by an institute postdoctoral fellowship from IISER Pune.}
\date{\today}
\dedicatory{}
\subjclass[2020]{primary 20C15; secondary 20C33, 20C05}
\keywords{rational representations, rational group algebras; Wedderburn decomposition; finite groups of Lie type}
\begin{document}
\begin{abstract}
In this article, we study rational representations of $G=\operatorname{GL}_2(q)$, where $q$ is a prime power. Let $\rho$ be an irreducible representation of $G$ over $\mathbb{Q}$. Then $\rho$ affords the character
\[
\Omega(\chi)=m_{\mathbb{Q}}(\chi)\sum_{\sigma\in\operatorname{Gal}(\mathbb{Q}(\chi)/\mathbb{Q})}\chi^{\sigma},
\]
for some irreducible complex character $\chi$ of $G$, where $m_{\mathbb{Q}}(\chi)$ denotes the Schur index of $\chi$ over $\mathbb{Q}$, and conversely, every character of this form is afforded by an irreducible representation of $G$ over $\mathbb{Q}$. We obtain a combinatorial description for the counting of inequivalent irreducible $\mathbb{Q}$-representations of $G$ of each distinct degree. Furthermore, we briefly determine the rational character table of $G$ and present a method for constructing an irreducible rational matrix representation $\rho$ of $G$ affording the character $\Omega(\chi)$, where $\chi$ is an irreducible complex character of $G$ arising from parabolic induction. Finally, using the results on the rational representations of $G$, we derive an explicit combinatorial formula, depending only on $q$, for the Wedderburn decomposition of $\mathbb{Q}G$.
\end{abstract}

\maketitle
\tableofcontents
\section{Introduction}\label{sec:intro}
The construction of matrix representations of finite groups over a field is a classical and fundamental problem in mathematics. The study of representations of finite groups over $\mathbb{C}$ was initiated by Frobenius, while Schur later extended the theory to subfields of $\mathbb{C}$, particularly $\mathbb{R}$ and $\mathbb{Q}$. Although a substantial body of literature is devoted to the computation of matrix representations, most existing methods are confined to certain types of representations for specific classes of groups. For instance, an algorithm for computing an irreducible complex matrix representation affording a character $\chi$ of a finite group $G$, under the condition that $\chi(1)\leq 100$, is presented in \cite{Dabbaghian, Dixon} and implemented in the \textsc{Repsn} package of {\sf GAP} \cite{Gap}. Nevertheless, determining all inequivalent irreducible matrix representations of a finite group over a field $\mathbb{F}$, including the case $\mathbb{F}=\mathbb{C}$, remains a difficult and fundamental problem in representation theory.

Let $\mathbb{F}_q$ be the finite field of order $q$, where $q=p^m$ for some prime $p$ and positive integer $m$. Denote by $\GL_2(q)$ the group of all invertible $2\times 2$ matrices over $\mathbb{F}_q$, namely,
\[
\GL_2(q)=\{A\in \mathrm{M}_2(\mathbb{F}_q)\;:\;\det(A)\neq 0\}.
\]
In recent years, rational-valued irreducible complex characters of finite groups have attracted considerable attention (see \cite{Grittini, Navarro, Navarro1, Navarro2, Tiep}). The present paper is devoted to the construction of irreducible matrix representations of the finite classical group $G=\GL_2(q)$ over $\mathbb{Q}$. The study of matrix representations of a finite group $G$ over $\mathbb{Q}$ is important for several reasons. One of the central problems in rationality theory is to determine whether an $\mathbb{F}$-representation of $G$ can be realized over a subfield, such as realizing a $\mathbb{C}$-representation over $\mathbb{R}$ or $\mathbb{Q}$, and to understand the action of the Galois group on vector spaces over $\mathbb{Q}$. Moreover, rational representations admit integral realizations by a theorem of Burnside \cite{Burnside}, which states that if $\rho \colon G \to \GL_n(\mathbb{Q})$ is a representation, then there exists a conjugate representation $\tilde{\rho}$ such that $\tilde{\rho}(g)\in \GL_n(\mathbb{Z})$ for all $g\in G$. Thus, all matrix entries of $\tilde{\rho}$ are integers. In addition, a $\mathbb{Z}G$-module $M$ is $\mathbb{Z}$-reducible if and only if $\mathbb{Q}M$ is reducible as a $\mathbb{Q}G$-module (see \cite[Theorem~73.9]{Curtis-Reiner}). Consequently, rational representations play an important role in various algebraic and arithmetic settings. Representations of finite groups over $\mathbb{Q}$ arise naturally in many areas of mathematics, much like complex representations, further emphasizing their significance (for a detailed account of rational representations of finite groups, see \cite{Bartel, Bouc, Ford, Kletzing, Plesken}).

Throughout this paper, $G$ denotes a finite group and $\Irr(G)$ denotes the set of all irreducible complex characters of $G$. For $\chi\in \Irr(G)$, define
\[
\Omega(\chi)= m_{\mathbb{Q}}(\chi)\sum_{\sigma \in \Gal(\mathbb{Q}(\chi)/\mathbb{Q})}\chi^{\sigma},
\]
where $m_{\mathbb{Q}}(\chi)$ denotes the Schur index of $\chi$ over $\mathbb{Q}$. Observe that $\Omega(\chi)$ is the character of an irreducible $\mathbb{Q}$-representation $\rho$ of $G$. Conversely, if $\rho$ is an irreducible $\mathbb{Q}$-representation of $G$, then there exists $\chi\in \Irr(G)$ such that $\Omega(\chi)$ is the character afforded by $\rho$ (see \cite[Corollary~10.2]{I}).

The character table of $\GL_2(q)$ is well known and was explicitly determined in~\cite{St51b}. We adopt the notation from \cite[p.~104]{GeMa20}. A detailed description of the character table of $\GL_2(q)$ is provided in Section~\ref{sec:conj-char}. The group $\GL_2(q)$ has $q^2-1$ irreducible characters, which are partitioned into the following four families:
\[
\chi^{(n)}_1, \quad
\chi^{(n)}_q, \quad
\chi^{(m,n)}_{q+1},
\quad \text{and} \quad
\chi^{(n)}_{q-1}.
\]

The number of isomorphism classes of irreducible representations of a finite group $G$ over $\mathbb{Q}$ equals the number of conjugacy classes of cyclic subgroups of $G$ (see \cite[Corollary~1, Chapter~13]{Serre}). We prove Theorem~\ref{thm:RationRepsGL2}, which determines the number of pairwise non-isomorphic simple $\mathbb{Q}G$-modules of each possible dimension for $G=\GL_2(q)$. Furthermore, we briefly describe the rational character table of $G$, consisting of the rational conjugacy classes of $G$ and the corresponding rational character values (see Section~\ref{sec:RatCharTab}).

\begin{theorem} \label{thm:RationRepsGL2}
Let $G = \GL_2(q)$, where $q$ is a prime power. For a positive integer $m$, let $\varphi(m)$ denote the number of integers in $[1,m]$ that are coprime to $m$, and let $\tau(m)$ denote the number of positive divisors of $m$. 
Then the following statements hold.

\begin{enumerate}
    \item For each $d \mid (q-1)$, there is exactly one (up to isomorphism) simple $\mathbb{Q} G$-module of dimension $\varphi(d)$ that affords the character $\Omega(\chi^{(n)}_1)$ for some $n$ such that $d = \dfrac{q-1}{\gcd(n,q-1)}$.

    \item For each $d \mid (q-1)$, there is exactly one (up to isomorphism) simple $\mathbb{Q} G$-module of dimension $q\,\varphi(d)$ that affords the character $\Omega(\chi^{(n)}_q)$ for some $n$ such that $d = \dfrac{q-1}{\gcd(n,q-1)}$.

    \item For each $d \mid (q^2-1)$ with $d \nmid (q-1)$, there is exactly one (up to isomorphism) simple $\mathbb{Q} G$-module of dimension $\frac{q-1}{2}\,\varphi(d)$ that affords the character $\Omega(\chi^{(n)}_{q-1})$ for some $n$ such that $d = \dfrac{q^2-1}{\gcd(n,q^2-1)}$.

    \item Let $U = (\mathbb{Z}/(q-1)\mathbb{Z})^\times$ act on the set $X$ of unordered pairs $(m,n)$ with $0 \le m, n \le q-2$ and $m \neq n$ by multiplication by multiplication modulo $q-1$. Let $\mathcal{O}$ be a set of representatives of the orbits of this action. For each $(m,n) \in \mathcal{O}$, there is exactly one (up to isomorphism) simple $\mathbb{Q} G$-module of dimension 
    \[
    (q+1)\,\bigl[\mathbb{Q}(\zeta_{q-1}^{\,m+n},\;\zeta_{q-1}^{\,m}+\zeta_{q-1}^{\,n}) : \mathbb{Q}\bigr]
    \]
    that affords the character $\Omega(\chi^{(m,n)}_{q+1})$, where $\zeta_{q-1}$ is a primitive $(q-1)$-th root of unity in $\mathbb{C}$.

    \item $|\irr_{\mathbb{Q}}(G)| = \tau(q-1) + \tau(q^2-1) + |\mathcal{O}|$.
\end{enumerate}
\end{theorem}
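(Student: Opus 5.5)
The plan is to apply the general principle quoted above (\cite[Corollary~10.2]{I}): the irreducible $\mathbb{Q}$-representations of $G$ correspond bijectively to Galois orbits on $\Irr(G)$, and the orbit of $\chi$ gives a module of dimension $m_{\mathbb{Q}}(\chi)\,[\mathbb{Q}(\chi):\mathbb{Q}]\,\chi(1)$. So the proof has three parts: show that every Schur index $m_{\mathbb{Q}}(\chi)$ equals $1$; compute the character fields $\mathbb{Q}(\chi)$ from the character table in Section~\ref{sec:conj-char}; and count the Galois orbits inside each of the four families.

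\textbf{Schur indices.} For $\chi^{(n)}_1$ (linear) and for $\chi^{(n)}_q=\chi^{(n)}_1\cdot\mathrm{St}$ there is nothing subtle: $\mathrm{St}$ is a $\mathbb{Q}$-valued constituent of the permutation character on $G/B$, so its Schur index is $1$. For $\chi^{(m,n)}_{q+1}=\mathrm{Ind}_B^G(\lambda)$, with $\lambda$ a linear character of the Borel subgroup $B$, we have $\mathbb{Q}(\chi)\subseteq\mathbb{Q}(\lambda)$. Since $\lambda$ is realized over $\mathbb{Q}(\lambda)$, induction realizes $\chi$ over $\mathbb{Q}(\lambda)$ as well. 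Hence $m_{\mathbb{Q}}(\chi)$ divides $[\mathbb{Q}(\lambda):\mathbb{Q}(\chi)]\le 2$, and $\langle\mathrm{Ind}\,\lambda,\chi\rangle=1$ then gives $m_{\mathbb{Q}}(\chi)=1$. The cuspidal characters $\chi^{(n)}_{q-1}$ are the hardest case, and I expect this to be the main obstacle. Here I would invoke the known result (Janusz; Gow) that every irreducible character of $\GL_n(q)$ has Schur index $1$ over $\mathbb{Q}$. If that is not allowed, I would use the Brauer--Speiser theorem for real characters together with the local Hasse-invariant argument through the Gelfand--Graev character: $\chi^{(n)}_{q-1}$ occurs with multiplicity one in $\mathrm{Ind}_U^G\psi$, which is realizable over $\mathbb{Q}(\zeta_p)$, and a multiplicity-one constituent has $m_{\mathbb{Q}}\mid[\mathbb{Q}(\zeta_p,\chi):\mathbb{Q}(\chi)]$. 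The remaining odd-part and $p$-adic bookkeeping still has to be done.

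\textbf{Character fields.} The values of $\chi^{(n)}_1$ and $\chi^{(n)}_q$ are powers of $\zeta_{q-1}^n$ and ones multiplied by integers, so $\mathbb{Q}(\chi)=\mathbb{Q}(\zeta_d)$ with $d=(q-1)/\gcd(n,q-1)$. The values of $\chi^{(m,n)}_{q+1}$ are $(q+1)\zeta^{m+n}$-type terms on central elements, $\zeta^{mk}+\zeta^{nk}$-type sums, and $\zeta^{ma}\zeta^{nb}+\zeta^{na}\zeta^{mb}$ on split semisimple classes. So $\mathbb{Q}(\chi)$ is generated by $\zeta^{m+n}$ and the elementary symmetric expressions. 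This gives $\mathbb{Q}(\zeta^{m+n},\zeta^m+\zeta^n)$, since all the other sums are polynomials in these: they are power sums of the pair $\{\zeta^m,\zeta^n\}$, expressible through $e_1=\zeta^m+\zeta^n$ and $e_2=\zeta^{m+n}$. Checking that this expression step is exact is routine. For $\chi^{(n)}_{q-1}$ the values are $-(\theta^{nk}+\theta^{nqk})$ with $\theta=\zeta_{q^2-1}$, together with $(q-1)\theta^{n(q+1)k}$ on central elements. The field is therefore the fixed field of the Frobenius $x\mapsto x^q$ inside $\mathbb{Q}(\zeta_d)$, with $d=(q^2-1)/\gcd(n,q^2-1)$. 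Because $d\nmid q-1$, the map $q$ acts nontrivially modulo $d$, so the degree is $\varphi(d)/2$. This gives the dimension $(q-1)\varphi(d)/2$ in (3).

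\textbf{Counting orbits.} $\Gal(\mathbb{Q}(\zeta_{q^2-1})/\mathbb{Q})\cong(\mathbb{Z}/(q^2-1))^\times$ acts on the labels by $n\mapsto an$, so the orbits correspond to the order $d$ of the label. For the families $\chi_1$ and $\chi_q$, the labels lie in $\mathbb{Z}/(q-1)$, and the orbits correspond to divisors $d\mid q-1$; this proves (1) and (2). For $\chi_{q-1}$, the labels are classes in $\mathbb{Z}/(q^2-1)$ with $n\not\equiv 0 \pmod{q+1}$, modulo $n\sim qn$. Scaling by units already contains $n\mapsto qn$, so the orbits are indexed by divisors $d\mid q^2-1$ with $d\nmid q-1$, which gives (3). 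For $\chi_{q+1}$, the labels are unordered pairs $\{m,n\}$ with $m\ne n$, and the action of $(\mathbb{Z}/(q^2-1))^\times$ factors through the surjection onto $U$; this gives (4). Finally, (5) is the sum of the four counts: $2\tau(q-1)+(\tau(q^2-1)-\tau(q-1))+|\mathcal{O}|$, which equals $\tau(q-1)+\tau(q^2-1)+|\mathcal{O}|$.
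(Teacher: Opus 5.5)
Your plan follows the paper's proof. Both use Schur index $1$ for every $\chi\in\Irr(G)$ (the paper simply quotes Gow, which is also your main route), read the character fields off Table~\ref{table-gl(2)}, count Galois orbits on the labels, and apply the degree formula $m_{\mathbb{Q}}(\chi)[\mathbb{Q}(\chi):\mathbb{Q}]\chi(1)$. Your orbit counts and the sum in (5) agree with the paper. Two places need attention.

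\textbf{Principal series Schur index.} Your self-contained argument here does not work as written.
\begin{itemize}
\item The fact $\langle\mathrm{Ind}_B^G\lambda,\chi\rangle=1$ concerns a $\mathbb{Q}(\lambda)$-character. It therefore only gives $m_{\mathbb{Q}(\lambda)}(\chi)=1$, which you already knew.
\item Over $\mathbb{Q}(\chi)$ the relevant character is $\sum_{\tau\in\Gal(\mathbb{Q}(\lambda)/\mathbb{Q}(\chi))}(\lambda^{\tau})^G=[\mathbb{Q}(\lambda):\mathbb{Q}(\chi)]\,\chi$. This only yields $m_{\mathbb{Q}}(\chi)\mid 2$.
\item The index-$2$ case really occurs, for instance when $\nu=\mu^{-1}\neq\mu$ (compare Theorem~\ref{thm:principal}(2)). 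There $\chi$ is real-valued, so Brauer--Speiser only gives $m_{\mathbb{Q}}(\chi)\le 2$.
\end{itemize}
Since you cite Gow anyway, the proof of the statement survives.

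If you want to avoid the citation, one argument handles all non-linear characters at once, including the cuspidal ones you flagged as the obstacle. The Gelfand--Graev character is realizable over $\mathbb{Q}$, not only over $\mathbb{Q}(\zeta_p)$.
\begin{itemize}
\item Write $\mathrm{Ind}_U^G\psi=\mathrm{Ind}_P^G(\mathrm{Ind}_U^P\psi)$ with $P=\{\left(\begin{smallmatrix}a&b\\0&1\end{smallmatrix}\right): a\in\mathbb{F}_q^\times,\ b\in\mathbb{F}_q\}\cong\mathrm{AGL}_1(q)$.
\item $\mathrm{Ind}_U^P\psi$ is the unique irreducible character of $P$ of degree $q-1$. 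It equals the doubly transitive permutation character of $P$ on $\mathbb{F}_q$ minus $1_P$, so it is afforded by a $\mathbb{Q}$-representation.
\item From the table, $\langle\chi|_U,\psi\rangle=1$ for every non-linear $\chi\in\Irr(G)$, so each occurs exactly once in $\mathrm{Ind}_U^G\psi$. As a check, the degrees add up to $(q-1)(q^2-1)$.
\end{itemize}
Hence $m_{\mathbb{Q}}(\chi)\mid 1$.

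\textbf{Cuspidal character fields.} Your formulation is the right one, but you should spell out the ``therefore''. Put $\zeta=\eta^n$.
\begin{itemize}
\item Both $\zeta+\zeta^q$ and $\zeta^{q+1}$ lie in $\mathbb{Q}(\chi)$.
\item $\zeta$ is a root of $T^2-(\zeta+\zeta^q)T+\zeta^{q+1}$, so $[\mathbb{Q}(\zeta):\mathbb{Q}(\chi)]\le 2$.
\item All character values are fixed by the nontrivial involution $\zeta\mapsto\zeta^q$.
\end{itemize}
Hence $\mathbb{Q}(\chi)$ is exactly the fixed field of that involution, of degree $\varphi(d)/2$.

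The central value is essential here, because $\zeta+\zeta^q$ alone need not generate this field. For $q=5$ and $n=3$ one has $d=8$ and $\eta^3+\eta^{15}=0$. Yet $\chi^{(3)}_{4}(A_1(1))=4\eta^{18}$ with $\eta^{18}$ of order $4$, so $\mathbb{Q}(\chi)=\mathbb{Q}(i)$.

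The paper's proof identifies $\mathbb{Q}(\chi^{(n)}_{q-1})$ with $\mathbb{Q}(\eta^n+\eta^{nq})$ and uses $[\mathbb{Q}(\zeta_d+\zeta_d^{q}):\mathbb{Q}]=\varphi(d)/2$. Both claims fail in this example. The dimension $\frac{q-1}{2}\varphi(d)$ in (3) is still correct, and your fixed-field argument is what actually proves it.
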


For a finite $p$-group $G$ and $\chi \in \Irr(G)$, the construction of an irreducible rational matrix representation affording the character $\Omega(\chi)$ is equivalent to determining a pair $(H,\psi)$, where $H$ is a subgroup of $G$ and $\psi \in \lin(H)$ satisfies $\psi^G=\chi$ and $\mathbb{Q}(\psi)=\mathbb{Q}(\chi)$ when $p$ is an odd prime. In the case $p=2$, one requires a subgroup $H\leq G$ and a character $\psi \in \Irr(H)$ such that $\psi^G=\chi$, $\mathbb{Q}(\chi)=\mathbb{Q}(\psi)$, and $H/\ker(\psi)$ is isomorphic to a cyclic, generalized quaternion, dihedral, or semi-dihedral group. Such a pair $(H,\psi)$ is called a \emph{required pair} for an irreducible rational matrix representation of $G$ affording the character $\Omega(\chi)$ (see~\cite{Ford} for the existence of such pairs). The reader may refer to \cite[Section~3]{Ram1} for an explicit procedure to compute an irreducible rational matrix representation of $G$ affording the character $\Omega(\chi)$ using a required pair.

Suppose $\rho : G \to \mathrm{GL}_d(\mathbb{C})$ is an irreducible complex representation of $G$ affording the character $\chi$. Let $\mathbb{L} = \mathbb{Q}(\chi)$ and choose a $\mathbb{Q}$-basis $\{b_1,\dots,b_t\}$ of $\mathbb{L}$, where $t=[\mathbb{L}:\mathbb{Q}]$. For each $g\in G$, write $\rho(g) = (a_{ij}(g))$. In the special case when $m_{\mathbb{Q}}(\chi)=1$, we have $a_{ij}(g)\in \mathbb{L}$. Replace each entry $a_{ij}(g)$ by the $t\times t$ matrix $\M(a_{ij}(g))$ over $\mathbb{Q}$ representing multiplication by $a_{ij}(g)$ on $\mathbb{L}$ with respect to the basis $\{b_j\}$. Then the block matrix
\[
\widetilde{\rho}(g)=\bigl(\M(a_{ij}(g))\bigr)_{i,j=1}^{d}
\]
has size $dt$ and entries in $\mathbb{Q}$. Consequently, $\widetilde{\rho}$ defines an irreducible $\Q$-representation of $G$ affording the character $\Omega(\chi)$.

The study of matrix representations of several classes of finite $p$-groups using required pairs has been carried out by us as part of the first author’s PhD work (see \cite{Ram1, Ram4, Ram6}). The present work extends this line of investigation. In this article, we establish analogous results by employing techniques similar to those used in the theory of required pairs for constructing irreducible matrix representations of finite $p$-groups over $\Q$. More precisely, we construct irreducible matrix representations of $G=\GL_2(q)$ over $\Q$ affording the character $\Omega(\chi)$, where $\chi \in \Irr(G)$ is not afforded by a cuspidal representation of $G$. Furthermore, we prove the existence of irreducible rational representations $\rho$ and $\theta$ of $\GL_2(q)$, both of degree greater than one, such that their tensor product $\rho \otimes_{\mathbb{Q}} \theta$ remains irreducible over $\mathbb{Q}$ (see Corollary~\ref{cor:tensor}). In contrast, no such phenomenon occurs for irreducible complex representations of $\GL_2(q)$.

Next, this article also investigates the Wedderburn decomposition of the rational group algebra of $\GL_2(q)$, which is obtained using the results from its rational representations. Let $G$ be a group and let $\mathbb{F}$ be a field. Then the group algebra $\mathbb{F}G$ is the free $\mathbb{F}$-module with basis $G$. The description of group algebras is a classical problem in algebra and admits several approaches. In the semisimple case, namely when $G$ is finite and the characteristic of $\mathbb{F}$ does not divide $|G|$, the problem reduces to describing the Wedderburn components of the group algebra. By the Wedderburn--Artin theorem, a semisimple group algebra $\mathbb{F}G$ decomposes as a direct sum of matrix algebras over division rings
\[
\mathbb{F}G \cong \bigoplus_{i=1}^{r} \M_{n_i}(D_i),
\]
where each $\M_{n_i}(D_i)$ is a full matrix algebra of size $n_i$ over a division ring $D_i$ that is finite-dimensional over its center. These algebras are called the \emph{simple components} of $\mathbb{F}G$. Furthermore, by the Brauer--Witt theorem \cite{Yam}, each simple component is Brauer equivalent to a cyclotomic algebra. The Wedderburn decomposition of rational group algebras has been extensively studied because of its significance in understanding various algebraic structures (see \cite{Herman, Jes-Rio, Rit-Seh}).

Such decompositions can be determined explicitly either through character-theoretic methods or by using subgroup structures via Shoda pairs for certain classes of rational group algebras, particularly those associated with monomial groups. We refer to some of the recent works on rational group algebras based on Shoda pair theory presented in~\cite{Shoda1, Shoda2, Shoda3, Shoda4, Shoda5, Shoda6}. The reader may also see the survey article by Bakshi~\cite{Bakshi} for a comprehensive account of the literature on Shoda pair theory. Computational methods, such as those implemented in the \textsc{Wedderga} package of {\sf GAP}~\cite{Gap}, make use of these techniques to compute the Wedderburn decomposition of rational group algebras of such groups. Nevertheless, explicit computations remain difficult, especially for groups of large order.

For finite abelian groups, Perlis and Walker \cite{PW} derived a combinatorial formula for the Wedderburn decomposition of their rational group algebras by counting cyclic subgroups. Rational group algebras of alternating groups have also been studied, and an explicit formula for the decomposition of $\Q A_n$ is given in \cite[Theorem~2]{GiambrunoJesper1998}. For several classes of finite $p$-groups, we developed combinatorial techniques to determine explicitly the Wedderburn decomposition of their rational group algebras, either by reducing the problem to abelian sections or by using parameters arising from group presentations (see \cite{Ram1, Ram2, Ram3, Ram4, Ram5, Ram6}). Furthermore, for $G\in\{\SL_2(q), \PSL_2(q)\}$, combinatorial formulas for the Wedderburn decomposition of $\Q G$ have been established in \cite{Ram7}.

For a positive integer $d$, let $\zeta_d$ denote a primitive $d$-th root of unity in $\mathbb{C}$. Theorem~\ref{thm:wedderburn GL} establishes a combinatorial formula, depending only on $q$, for the Wedderburn decomposition of the rational group algebra of $\GL_2(q)$, where $q$ is a prime power.

\begin{theorem}\label{thm:wedderburn GL}
Let $G = \GL_2(q)$, where $q$ is a prime power. Let $U = (\mathbb{Z}/(q-1)\mathbb{Z})^\times$ act on the set $X$ of unordered pairs $(m,n)$ with $0 \le m, n \le q-2$ and $m \neq n$ by multiplication modulo $q-1$. Let $\mathcal{O}$ be a set of representatives of the orbits of this action. Then the Wedderburn decomposition of $\Q G$ is given by
\[
\Q G \cong \bigoplus_{d \mid q-1} \Q(\zeta_d)
\bigoplus_{\substack{d \mid q^2-1 \\ d \nmid q-1}}
\M_{q-1}(\Q(\zeta_d + \zeta_d^{\,q}))
\bigoplus_{d \mid q-1} \M_q(\Q(\zeta_d))
\bigoplus_{(m,n)\in\mathcal{O}}
\M_{q+1}(\Q(\zeta_{q-1}^{\,m+n},\;\zeta_{q-1}^{\,m}+\zeta_{q-1}^{\,n})).
\]
\end{theorem}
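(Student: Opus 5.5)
The plan is to pass from the rational representation theory of Theorem~\ref{thm:RationRepsGL2} to the Wedderburn decomposition through the standard dictionary for semisimple group algebras. For each $\chi\in\Irr(G)$, the simple component of $\Q G$ attached to the Galois orbit of $\chi$ is the algebra $\M_{n}(D)$. Here $D$ is a division algebra with centre $\Q(\chi)$ and index $m_{\Q}(\chi)$, and $n=\chi(1)/m_{\Q}(\chi)$. Distinct Galois orbits give distinct components, and these exhaust $\Q G$. So the proof reduces to three inputs for each of the four families in Theorem~\ref{thm:RationRepsGL2}: the character fields $\Q(\chi)$, the Schur indices $m_{\Q}(\chi)$, and the Galois-orbit parametrization. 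The last of these is exactly what parts (1)--(4) of Theorem~\ref{thm:RationRepsGL2} provide.

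First, the character fields, read off from the character table in Section~\ref{sec:conj-char}. For $\chi_1^{(n)}$ (the linear characters $\alpha^n\circ\det$) and for $\chi_q^{(n)}$ (the Steinberg character twisted by $\alpha^n\circ\det$), the values are $\zeta_{q-1}^{n k}$ times rational numbers. Hence $\Q(\chi)=\Q(\zeta_d)$ with $d=(q-1)/\gcd(n,q-1)$, and there is one orbit for each $d\mid q-1$.

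For the cuspidal characters $\chi_{q-1}^{(n)}$, the values on elliptic elements are $-(\varepsilon^{n}+\varepsilon^{nq})$, where $\varepsilon$ is a primitive $(q^2-1)$-th root of unity. On central elements the values are $(q-1)\varepsilon^{n(q+1)k}$, which already lie in $\Q(\varepsilon^n+\varepsilon^{nq})$. So the character field is $\Q(\zeta_d+\zeta_d^{q})$ with $d=(q^2-1)/\gcd(n,q^2-1)$, $d\nmid q-1$.

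For the principal series characters $\chi_{q+1}^{(m,n)}$, the values are generated by $\zeta_{q-1}^{m+n}$ (on central and unipotent-type classes) and $\zeta_{q-1}^{mk}+\zeta_{q-1}^{nk}$ (on split semisimple classes). This gives the field stated in part (4).

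Second, the Schur indices. For $\chi_1^{(n)}$ the Schur index is $1$ trivially. For $\chi_q^{(n)}$ and $\chi_{q+1}^{(m,n)}$ it is $1$, because these characters are constituents of characters induced from linear characters of the Borel subgroup $B$ with the same field of values. Concretely, $\chi_{q+1}^{(m,n)}=\operatorname{Ind}_B^G(\alpha^m\otimes\alpha^n)$ is realizable over $\Q(\alpha^m,\alpha^n)$. Then $m_{\Q}(\chi)$ divides the multiplicity in an appropriate Galois trace, which can be made $1$. Alternatively, cite the known result that $m_{\Q}(\chi)=1$ for every $\chi\in\Irr(\GL_n(q))$; this is Zelevinsky, or Ohmori for $\GL_2$. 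I would invoke the latter to treat all four families uniformly.

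With all Schur indices equal to $1$, each component is $\M_{\chi(1)}(\Q(\chi))$. This yields the four sums with matrix sizes $1$, $q$, $q-1$ and $q+1$. The indexing sets are $\{d\mid q-1\}$ twice, $\{d\mid q^2-1,\ d\nmid q-1\}$, and $\mathcal{O}$, by parts (1)--(4) of Theorem~\ref{thm:RationRepsGL2}.

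The main obstacle is the Schur index for the cuspidal family $\chi_{q-1}^{(n)}$. These characters are not induced from linear characters with the same field. The natural route is to use Gow's or Ohmori's theorem that irreducible characters of $\GL_n(q)$ have rational Schur index $1$. A self-contained argument would proceed differently: restrict $\chi_{q-1}^{(n)}$ to the mirabolic subgroup (order $q(q-1)$). There $\chi_{q-1}^{(n)}$ restricts to the irreducible Gelfand--Graev-type character, which is rational and of Schur index $1$. Combining this with the central character shows that the simple component has trivial Brauer class, by checking that the local invariants vanish at every place. As a consistency check, the dimensions should satisfy $\sum \chi(1)^2[\Q(\chi):\Q]=|G|$.
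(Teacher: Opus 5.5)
Your framework is the paper's: Reiner's description of the component attached to $\Omega(\chi)$ as $\M_{\chi(1)/m_{\Q}(\chi)}(D)$ with $Z(D)=\Q(\chi)$ (Lemma~\ref{Reiner}), Schur index $1$ from Gow (Lemma~\ref{lemma:schurindexGL}), and the Galois-orbit parametrization of Theorem~\ref{thm:RationRepsGL2}. The genuine gap is the cuspidal character field. You assert that the central values $(q-1)\varepsilon^{n(q+1)k}$ (your $\varepsilon$ has order $q^2-1$) already lie in $\Q(\varepsilon^{n}+\varepsilon^{nq})$. This fails whenever $\varepsilon^{n(q-1)}=-1$, because then $\varepsilon^{n}+\varepsilon^{nq}=0$.

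Write $\zeta=\varepsilon^{n}$, of order $d$. The failure occurs for every odd $q$ with $d=2(q-1)$, which divides $q^2-1$ but not $q-1$. Then $\zeta^{q}=-\zeta$, so $\zeta^{q+1}=-\zeta^{2}$ and $\zeta^{a}+\zeta^{aq}=(1+(-1)^{a})\zeta^{a}$. Hence $\Q(\chi)=\Q(\zeta^{2})=\Q(\zeta_{q-1})$, of degree $\varphi(q-1)$, whereas $\Q(\zeta_d+\zeta_d^{\,q})=\Q$. Concretely, take $q=5$ and $d=8$. The two cuspidal characters involved take the values $\pm 4i$ on the scalar matrix $A_1(1)$, so the component is $\M_4(\Q(i))$, not $\M_4(\Q)$. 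With the displayed formula the total $\Q$-dimension is $464\neq 480=|\GL_2(5)|$, which is exactly what the consistency check you propose at the end would have detected.

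The paper's Lemma~\ref{lemma:CharacterField}(4) makes the same unjustified inference: an element of the index-$2$ fixed field of $\zeta_d\mapsto\zeta_d^{\,q}$ need not generate that field. So the statement as written is false for every odd $q\ge 5$, and it cannot be proved without amending it.

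The correct statement: $\Q(\chi)=\Q(\zeta^{\,q+1},\,\zeta+\zeta^{\,q})$. These two numbers are rational multiples of $\chi(A_1(1))$ and $\chi(B_1(1))$. Every value of $\chi$ is a polynomial in them, by the same symmetric-function argument used for the principal series. Since the Galois orbit of $\chi$ has $\varphi(d)/2$ members, this field is the full fixed field of $\zeta\mapsto\zeta^{q}$. Thus the degree $\varphi(d)/2$, and with it the dimension count in Theorem~\ref{thm:RationRepsGL2}(3), survive; only the field label in the Wedderburn formula must change. The corrected field coincides with $\Q(\zeta_d+\zeta_d^{\,q})$ whenever $\zeta_d+\zeta_d^{\,q}\neq 0$, in particular for all even $q$. (Two sums of two roots of unity agree only if the pairs agree or both sums vanish.)

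A smaller point on Schur indices. Citing the general theorem is what the paper does. Your Borel argument on its own, however, only gives $m_{\Q}(\chi)\mid 2$ for principal series characters with $[\Q(\zeta_{q-1}^{\,m},\zeta_{q-1}^{\,n}):\Q(\chi)]=2$ (Lemma~\ref{lemma:EqualFields}), so ``can be made $1$'' is unsupported. Your mirabolic idea, on the other hand, settles every family with no local-invariant computation:
\begin{itemize}
\item The mirabolic subgroup $P=\{\left(\begin{smallmatrix} a & b\\ 0 & 1\end{smallmatrix}\right) : a\in\mathbb{F}_q^{\times},\ b\in\mathbb{F}_q\}$ has a rational character $\psi$ of degree $q-1$ with $m_{\Q}(\psi)=1$. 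It is afforded by the augmentation submodule of the permutation module $\Q[\mathbb{F}_q]$.
\item The character table shows $\langle\chi_P,\psi\rangle=1$ for every nonlinear $\chi\in\Irr(G)$, with $\chi_P=\psi$ when $\chi$ is cuspidal.
\item Hence $m_{\Q}(\chi)$ divides $m_{\Q}(\psi)\,\langle\chi_P,\psi\rangle\,[\Q(\chi,\psi):\Q(\chi)]=1$.
\end{itemize}
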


We now introduce the main notation used throughout the paper. For a positive integer $m$, let $\varphi(m)$ denote the number of integers in $[1,m]$ that are coprime to $m$, let $\tau(m)$ denote the number of positive divisors of $m$, and let $\zeta_m$ denote a primitive $m$-th root of unity in $\mathbb{C}$. For a finite group $G$, we denote by $\irr(G)$, $\lin(G)$, and $\nl(G)$ the sets of irreducible complex characters of $G$, linear complex characters of $G$, and non-linear irreducible complex characters of $G$, respectively. We write $\cd(G)=\{\chi(1)\mid \chi\in \irr(G)\}$ for the set of character degrees of $G$. For $\chi\in \irr(G)$, let $m_{\mathbb{Q}}(\chi)$ denote the Schur index of $\chi$ over $\mathbb{Q}$, and let $\mathbb{Q}(\chi)$ denote the field extension of $\mathbb{Q}$ generated by the values $\chi(g)$ for all $g\in G$. We further define
$\Omega(\chi)=m_{\mathbb{Q}}(\chi)\sum_{\sigma\in \Gal(\mathbb{Q}(\chi)/\mathbb{Q})}\chi^\sigma$.
We denote by $\irr_{\mathbb{Q}}(G)$ the set of irreducible rational characters of $G$. Occasionally, we write $\chi_\rho$ for the character afforded by a representation $\rho$ of $G$. All other notation used in this article is standard.

The organization of the article is as follows. In Section~\ref{sec:conj-char}, we introduce the character table of $G=\GL_2(q)$, which will be used throughout the article. In Section~\ref{sec:RationalReps}, we study the simple $\mathbb{Q}G$-modules and prove Theorem~\ref{thm:RationRepsGL2}. In Section~\ref{sec:RatCharTab}, we briefly discuss the rational character table of $G$. In Section~\ref{sec:MatrixReps}, we present a method for constructing irreducible matrix representations of $G$ over $\mathbb{Q}$. Finally, in Section~\ref{sec:GroupAlgbera}, we prove Theorem~\ref{thm:wedderburn GL}.

\section{Character table of $\GL_2(q)$}\label{sec:conj-char}
In this section, we describe the character table of $\GL_2(q)$. The character table of $\GL_2(q)$ is classical and was explicitly determined in~\cite{St51b}. We follow the notation used in \cite[p.~104]{GeMa20}. Although the material in this section is standard, we include it for the convenience of the reader, since information from the character tables of these groups is used extensively throughout the article.

Let $G=\GL_2(q)$, where $q$ is an arbitrary prime power. The conjugacy classes of $\GL_2(q)$ are described via normal forms of matrices. Let $\sigma$ be a generator of $\mathbb{F}_q^\times$, and let $\tau$ be a generator of $\mathbb{F}_{q^2}^\times$. There are four types of conjugacy classes, with representatives given as follows:
\[
A_1(a):=
\begin{pmatrix}
\sigma^a & 0\\
0 & \sigma^a
\end{pmatrix},
\quad
\text{where } 0\le a\le q-2,
\]
\[
A_2(a):=
\begin{pmatrix}
\sigma^a & 0\\
1 & \sigma^a
\end{pmatrix},
\quad
\text{where } 0\le a\le q-2,
\]
\[
A_3(a,b):=
\begin{pmatrix}
\sigma^a & 0\\
0 & \sigma^b
\end{pmatrix},
\qquad
\text{where } 0\le a<b\le q-2,
\]
\[
\text{and }
B_1(a):=
\begin{pmatrix}
0 & -\tau^{a(q+1)}\\
1 & \tau^a+\tau^{aq}
\end{pmatrix},
\quad
\text{where } a\in E_q \text{ and } (q+1)\nmid a.
\]

Here, $E_q\subseteq \{0,1,\dots,q^2-2\}$ denotes a set of representatives for the equivalence relation on $\mathbb{Z}$ defined by
\[
a\sim a'
\quad \text{if} \quad
a\equiv a' \pmod{q^2-1}
\quad \text{or} \quad
a\equiv qa' \pmod{q^2-1}.
\]

Observe that the matrix $B_1(a)$ is diagonalizable over $\mathbb{F}_{q^2}$, though not over $\mathbb{F}_q$, with eigenvalues $\tau^a$ and $\tau^{aq}$. The group $G=\GL_2(q)$ has exactly $q^2-1$ conjugacy classes. Consequently, it also has precisely $q^2-1$ irreducible complex characters, which fall into the following four families:
\[
\chi^{(n)}_1
\quad \text{of degree } 1,
\qquad
0\le n\le q-2,
\]
\[
\chi^{(n)}_q
\quad \text{of degree } q,
\qquad
0\le n\le q-2,
\]
\[
\chi^{(m,n)}_{q+1}
\quad \text{of degree } q+1,
\qquad
0\le n<m\le q-2,
\]
\[
\text{and }
\chi^{(n)}_{q-1}
\quad \text{of degree } q-1,
\qquad
n\in E_q \text{ and } (q+1)\nmid n.
\]

Finally, let $\varepsilon,\eta\in \mathbb{C}$ be primitive roots of unity of orders $q-1$ and $q^2-1$, respectively. Table~\ref{table-gl(2)} gives the character table of $G=\GL_2(q)$, as presented in~\cite[p.~104]{GeMa20}.

\begin{table}[h]
\centering
\caption{The character table of $G=\GL_2(q)$}\label{table-gl(2)}
\[
\begin{array}{c|cccc}
& A_1(a) & A_2(a) & A_3(a,b) & B_1(a)\\
\hline
\chi^{(n)}_1
& \varepsilon^{2na}
& \varepsilon^{2na}
& \varepsilon^{n(a+b)}
& \varepsilon^{na}
\\[1ex]
\chi^{(n)}_q
& q\varepsilon^{2na}
& 0
& \varepsilon^{n(a+b)}
& -\varepsilon^{na}
\\[1ex]
\chi^{(m,n)}_{q+1}
& (q+1)\varepsilon^{(m+n)a}
& \varepsilon^{(m+n)a}
& \varepsilon^{ma+nb}+\varepsilon^{na+mb}
& 0
\\[1ex]
\chi^{(n)}_{q-1}
& (q-1)\eta^{na(q+1)}
& -\eta^{na(q+1)}
& 0
& -(\eta^{na}+\eta^{naq})
\end{array}
\]
\end{table}

\section{Proof of Theorem~\ref{thm:RationRepsGL2}} \label{sec:RationalReps}
In this section, we prove Theorem~\ref{thm:RationRepsGL2}, which determines the number of pairwise non-isomorphic simple $\Q G$-modules of each possible dimension for $G=\GL_2(q)$, where $q$ is a power of a prime $p$. To establish the theorem, we first develop the necessary background material. 

Before proceeding to the proof of Theorem~\ref{thm:RationRepsGL2}, we begin with Lemma~\ref{lemma:CharacterField}, which describes the character field $\Q(\chi)$ for each $\chi\in \irr(G)$. Here, $\Q(\chi)$ denotes the field obtained by adjoining to $\Q$ all character values $\chi(g)$, where $g\in G$. Throughout this section, we retain the notation for the characters introduced in Section~\ref{sec:conj-char}.

\begin{lemma}\label{lemma:CharacterField}
Let $G=\GL_2(q)$, where $q$ is a prime power, and let $\chi\in \irr(G)$ such that
\[
\chi \in \left\{\,
\chi^{(n)}_1,\,
\chi^{(n)}_q,\,
\chi^{(m,n)}_{q+1}\ (0\le n<m\le q-2),\,
\chi^{(n)}_{q-1}\ (n\in E_q,\ (q+1)\nmid n)
\,\right\}.
\]
Let $\varepsilon$ be a primitive $(q-1)$-th root of unity in $\mathbb{C}$, and let $\eta$ be a primitive $(q^2-1)$-th root of unity in $\mathbb{C}$. Then the character fields $\Q(\chi)$ are given as follows.
\begin{enumerate}
\item $\Q(\chi^{(n)}_1)=\Q(\varepsilon^{n})$.
\item $\Q(\chi^{(n)}_q)=\Q(\varepsilon^{n})$.
\item $\Q(\chi^{(m,n)}_{q+1})=\Q(\varepsilon^{m+n},\, \varepsilon^{m}+\varepsilon^{n})$.
\item $\Q(\chi^{(n)}_{q-1})=\Q(\eta^{n}+\eta^{nq})$.
\end{enumerate}
\end{lemma}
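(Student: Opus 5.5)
We prove each of (1)--(4) by showing two inclusions: $\Q(\chi)$ contains the stated generators, and every value in Table~\ref{table-gl(2)} lies in the stated field. We always take $a=1$ (or $(a,b)=(0,1)$) in the class representatives, and we use only the fact that $\sigma$ and $\tau$ are generators.

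For (1), the value of $\chi^{(n)}_1$ on $B_1(1)$ is $\varepsilon^{n}$. Every other value in the row is a power of $\varepsilon^n$, so $\Q(\chi^{(n)}_1)=\Q(\varepsilon^n)$. Note that $B_1(1)$ is a legitimate class because $(q+1)\nmid 1$; we may assume $1\in E_q$, or else replace it by its equivalent representative, which gives the same value. For (2), the value on $B_1(1)$ is $-\varepsilon^n$, and all other values are $\Z$-multiples of powers of $\varepsilon^n$. The equality follows in the same way.

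For (3), the values in the row of $\chi^{(m,n)}_{q+1}$ are:
\begin{itemize}
\item $(q+1)\varepsilon^{(m+n)a}$ on $A_1(a)$, and $\varepsilon^{(m+n)a}$ on $A_2(a)$;
\item $\varepsilon^{ma+nb}+\varepsilon^{na+mb}$ on $A_3(a,b)$;
\item $0$ on $B_1(a)$.
\end{itemize}
Evaluating at $A_2(1)$ gives $\varepsilon^{m+n}\in\Q(\chi)$, and evaluating at $A_3(0,1)$ gives $\varepsilon^{n}+\varepsilon^{m}\in\Q(\chi)$. Together these give one inclusion. For the reverse inclusion, the only nontrivial point is to show that $\varepsilon^{ma+nb}+\varepsilon^{na+mb}$ lies in $K:=\Q(\varepsilon^{m+n},\varepsilon^m+\varepsilon^n)$. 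Write
\[
\varepsilon^{ma+nb}+\varepsilon^{na+mb}=\varepsilon^{(m+n)a}\bigl(\varepsilon^{n(b-a)}+\varepsilon^{m(b-a)}\bigr).
\]
The bracket is a power sum $p_k=x^k+y^k$ with $x=\varepsilon^m$, $y=\varepsilon^n$ and $k=b-a\ge 1$. By Newton's identities, $p_k$ is an integer polynomial in $e_1=x+y$ and $e_2=xy=\varepsilon^{m+n}$. Hence $p_k$ lies in $K$, and this proves the reverse inclusion.

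For (4), the value of $\chi^{(n)}_{q-1}$ on $B_1(1)$ is $-(\eta^n+\eta^{nq})$, which gives one inclusion. Conversely, the values on $B_1(a)$ are $-(x^a+y^a)$ with $x=\eta^n$, $y=\eta^{nq}$. Again by Newton's identities these lie in $\Q(x+y,\,xy)$, where $xy=\eta^{n(q+1)}$. The values on $A_1$ and $A_2$ are integer multiples of powers of $\eta^{n(q+1)}$. So it suffices to show that $\eta^{n(q+1)}\in L:=\Q(\eta^n+\eta^{nq})$.

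This last step is the main obstacle. The plan is a Galois argument inside $\Q(\eta)$. Let $s$ be an element of $\Gal(\Q(\eta)/\Q)\cong(\Z/(q^2-1)\Z)^\times$, acting by $\eta\mapsto\eta^s$, and suppose $s$ fixes $\eta^n+\eta^{nq}$. We then have $\eta^{ns}+\eta^{nqs}=\eta^n+\eta^{nq}$, an equality of two sums of two roots of unity. Such an equality forces one of the following:
\begin{itemize}
\item the two multisets $\{\eta^{ns},\eta^{nqs}\}$ and $\{\eta^n,\eta^{nq}\}$ are equal;
\item both sums vanish, i.e. $\eta^{nq}=-\eta^n$ and $\eta^{nqs}=-\eta^{ns}$.
\end{itemize}
In the first case, $s$ fixes the product $\eta^{n(q+1)}$. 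In the second case, squaring gives $\eta^{2n(q+1)}=1$, so $\eta^{n(q+1)}=\pm1$, which is rational and hence again fixed by $s$. In either case $\eta^{n(q+1)}$ is fixed by every automorphism fixing $L$, so by Galois theory it lies in $L$. This completes (4). The same multiset argument could replace Newton's identities elsewhere, but the polynomial route is cleaner there.
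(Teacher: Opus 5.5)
\textbf{Parts (1)--(3) and the reduction in (4).} Your parts (1)--(3) agree with the paper's proof. You use $A_2(1)$ where the paper uses $A_1(1)$, and the same power-sum recurrence. In (4) you correctly reduce everything to whether $\eta^{n(q+1)}\in\Q(\eta^n+\eta^{nq})$. The paper's proof passes over exactly this point: it concludes that $\Q(\eta^n+\eta^{nq})$ is the whole fixed field of $\eta^n\mapsto\eta^{nq}$ merely because $\eta^n+\eta^{nq}$ lies in that fixed field. Your multiset argument settles the case $\eta^n+\eta^{nq}\neq 0$ correctly. Equivalently, two unit complex numbers with nonzero sum $w$ have product $w/\overline{w}$, and $\Q(w)$ is stable under complex conjugation.

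\textbf{The vanishing case fails.} Squaring $\eta^{nq}=-\eta^n$ gives $\eta^{2n(q-1)}=1$, not $\eta^{2n(q+1)}=1$. In fact $\eta^{n(q+1)}=\eta^{nq}\eta^n=-\eta^{2n}$, which is usually not $\pm1$. This cannot be patched, because statement (4) itself is false in this case.

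Take $q=5$, $n=3$ (so $(q+1)\nmid n$), and $\eta=e^{2\pi i/24}$. Then $\eta^3$ is a primitive $8$th root of unity and $\eta^{15}=-\eta^3$, so $\Q(\eta^n+\eta^{nq})=\Q$. On the other hand, $\chi^{(3)}_4(A_1(1))=4\eta^{18}=-4i$ and $\chi^{(3)}_4(B_1(2))=-2\eta^{6}=-2i$, so $\Q(\chi^{(3)}_4)=\Q(i)$. Likewise $q=7$, $n=4$ gives $\Q(\chi)=\Q(\sqrt{-3})$ while $\eta^n+\eta^{nq}=0$.

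\textbf{What your argument does prove.} The first half of your argument shows in general that $\Q(\chi^{(n)}_{q-1})=\Q(\eta^{n(q+1)},\,\eta^n+\eta^{nq})$. This is the fixed field of $\eta^n\mapsto\eta^{nq}$ in $\Q(\eta^n)$, since $\eta^n$ has degree at most $2$ over it. Your multiset argument then shows that it equals $\Q(\eta^n+\eta^{nq})$ whenever $\eta^n+\eta^{nq}\neq 0$. The vanishing case, which happens for odd $q$ exactly when $\eta^{n(q-1)}=-1$, must either be excluded or the statement corrected to the two-generator field.
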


\begin{proof}
Let $G=\GL_2(q)$, where $q$ is a prime power, and let
\[
\chi \in \left\{\,
\chi^{(n)}_1,\,
\chi^{(n)}_q,\,
\chi^{(m,n)}_{q+1}\ (0\le n<m\le q-2),\,
\chi^{(n)}_{q-1}\ (n\in E_q,\ (q+1)\nmid n)
\,\right\}.
\]

\begin{enumerate}
    \item Let $\chi=\chi^{(n)}_1$. Observe that $$\chi^{(n)}_1(B_1(1))=\varepsilon^{n}.$$ Moreover, every value of $\chi^{(n)}_1$ is a power of $\varepsilon^{n}$ (see Table~\ref{table-gl(2)}). Hence, $\Q(\chi^{(n)}_1)=\Q(\varepsilon^{n})$.

    \item Let $\chi=\chi^{(n)}_q$. We have
    \[
    \chi^{(n)}_q(B_1(1))=-\varepsilon^{n}.
    \]
    Furthermore, all values of $\chi^{(n)}_q$ belong to $\Q(\varepsilon^{n})$ (see Table~\ref{table-gl(2)}). Therefore, $\Q(\chi^{(n)}_q)=\Q(\varepsilon^{n})$.

    \item From Table~\ref{table-gl(2)}, we obtain
\[
\chi^{(m,n)}_{q+1}(A_1(1))=(q+1)\varepsilon^{m+n}
\quad \text{and} \quad
\chi^{(m,n)}_{q+1}(A_3(0,1))=\varepsilon^{m}+\varepsilon^{n}.
\]
Hence,
\[
\Q(\varepsilon^{m+n},\,\varepsilon^{m}+\varepsilon^{n})
\subseteq
\Q(\chi^{(m,n)}_{q+1}).
\]

For the reverse inclusion, consider the value of $\chi^{(m,n)}_{q+1}$ on $A_3(a,b)$
\[
\chi^{(m,n)}_{q+1}(A_3(a,b))
=
\varepsilon^{ma+nb}+\varepsilon^{na+mb},
\]
where $0\le a<b\le q-2$ (see Table~\ref{table-gl(2)}). Set $x=\varepsilon^{m}$ and $y=\varepsilon^{n}$. Then we have
\begin{align*}
\chi^{(m,n)}_{q+1}(A_3(a,b))
&=
\varepsilon^{ma+nb}+\varepsilon^{na+mb}\\
&=
x^{a}y^{b}+x^{b}y^{a}\\
&=
(xy)^{\min(a,b)}(x^{|a-b|}+y^{|a-b|}).
\end{align*}
Since $xy=\varepsilon^{m+n}$, it remains to show that $x^{k}+y^{k}$ belongs to
\[
\mathbb{K}:=\Q(\varepsilon^{m+n},\,\varepsilon^{m}+\varepsilon^{n})
\]
for every integer $k\ge0$.

Observe that $x$ and $y$ are the roots of the polynomial
\[
T^2-(\varepsilon^{m}+\varepsilon^{n})T+\varepsilon^{m+n}=0.
\]
Thus, $x$ and $y$ are algebraic over $\mathbb{K}$. For each integer $k\ge0$, let
\[
S_k=x^k+y^k.
\]
Since $S_k$ is a symmetric polynomial in $x$ and $y$ with integer coefficients, the Fundamental Theorem of Symmetric Polynomials implies that $S_k$ can be expressed as a polynomial in the elementary symmetric functions
\[
x+y=\varepsilon^{m}+\varepsilon^{n}
\quad \text{and} \quad
xy=\varepsilon^{m+n}.
\]
Equivalently, the sequence $(S_k)$ satisfies the recurrence relation
\[
S_k=(x+y)S_{k-1}-xy\,S_{k-2}
\qquad (k\ge2),
\]
with initial conditions $S_0=2$ and $S_1=x+y$. Hence, every $S_k$ belongs to $\mathbb{K}$. Consequently,
\[
\varepsilon^{ma+nb}+\varepsilon^{na+mb}\in \mathbb{K},
\]
and therefore all values of $\chi^{(m,n)}_{q+1}$ lie in $\mathbb{K}$. This shows that
\[
\Q(\chi^{(m,n)}_{q+1})
\subseteq
\mathbb{K}.
\]
Hence, we get
\[
\Q(\chi^{(m,n)}_{q+1})
=
\Q(\varepsilon^{m+n},\,\varepsilon^{m}+\varepsilon^{n}).
\]

\item Let $g\in G$, and let $\eta$ be a primitive $(q^2-1)$-th root of unity in $\mathbb{C}$. From Table~\ref{table-gl(2)}, we have
\[
\chi^{(n)}_{q-1}(g)
\in
\left\{
0,\,
(q-1)\eta^{na(q+1)},\,
-\eta^{na(q+1)},\,
-(\eta^{na}+\eta^{naq})
\right\},
\]
where $n\in E_q$, $(q+1)\nmid n$, and $E_q\subseteq \{0,1,\dots,q^2-2\}$ is a set of representatives for the equivalence relation on $\mathbb{Z}$ defined by
\[
a\sim a'
\quad \text{if} \quad
a\equiv a' \pmod{q^2-1}
\quad \text{or} \quad
a\equiv qa' \pmod{q^2-1}.
\]

Let
\[
d=\frac{q^2-1}{\gcd(n,q^2-1)}.
\]
Then $\eta^n$ is a primitive $d$-th root of unity. Define
\[
\sigma:\eta^n\longmapsto \eta^{nq}.
\]
Since $\gcd(q,q^2-1)=1$ and $q^2\equiv1\pmod d$, the map $\sigma$ defines a Galois automorphism of $\Q(\eta^n)$. Moreover, because $(q+1)\nmid n$, we have $\sigma\neq \operatorname{id}$, while $\sigma^2=\operatorname{id}$.
Therefore, the fixed field of $\sigma$ has index $2$ in $\Q(\eta^n)$.

Now, observe that
\[
\sigma(\eta^n+\eta^{nq})
=
\eta^{nq}+\eta^{nq^2}
=
\eta^{nq}+\eta^n
=
\eta^n+\eta^{nq},
\]
since $q^2\equiv1\pmod d$. Hence, $\eta^n+\eta^{nq}$ lies in the fixed field of $\sigma$. As the fixed field has degree $2$, we conclude that
\[
\Q(\eta^n+\eta^{nq})
\]
is precisely the fixed field of $\sigma$.

Next, we note that
\[
\sigma(\eta^{n(q+1)})
=
\sigma(\eta^n\eta^{nq})
=
\eta^{nq}\eta^{nq^2}
=
\eta^{nq}\eta^n
=
\eta^{n(q+1)}.
\]
Therefore,
\[
\chi^{(n)}_{q-1}(A_1(a))
=
(q-1)\eta^{na(q+1)}
\in
\Q(\eta^n+\eta^{nq}),
\]
and similarly,
\[
\chi^{(n)}_{q-1}(A_2(a))
=
-\eta^{na(q+1)}
\in
\Q(\eta^n+\eta^{nq}).
\]
Moreover, we have
\begin{align*}
\sigma(\chi^{(n)}_{q-1}(B_1(a)))
&=
\sigma\bigl(-(\eta^{na}+\eta^{naq})\bigr)\\
&=
-(\eta^{naq}+\eta^{naq^2})\\
&=
-(\eta^{naq}+\eta^{na})\\
&=
\chi^{(n)}_{q-1}(B_1(a)).
\end{align*}
Thus,
\[
\chi^{(n)}_{q-1}(B_1(a))
\in
\Q(\eta^n+\eta^{nq}).
\]
Since $\chi^{(n)}_{q-1}(B_1(1))
=
-(\eta^n+\eta^{nq})$,
the field generated by the character values is exactly $\Q(\eta^n+\eta^{nq})$. Therefore, we conclude that
\[
\Q(\chi^{(n)}_{q-1})
=
\Q(\eta^n+\eta^{nq}).
\]
\end{enumerate}
This completes the proof of Lemma~\ref{lemma:CharacterField}.
\end{proof}

 We now describe the Galois conjugacy classes. Two irreducible characters $\chi,\psi\in \irr(G)$ are said to be \emph{Galois conjugates} over $\Q$ if $\Q(\chi)=\Q(\psi)$ and there exists $\sigma\in \Gal(\Q(\chi)/\Q)$ such that
\[
\chi^\sigma=\psi,
\]
where $\chi^\sigma(g)=\sigma(\chi(g))$ for all $g\in G$. It is straightforward to verify that this defines an equivalence relation on $\Irr(G)$. In Lemma~\ref{lemma:galois}, we determine the number of distinct Galois conjugacy classes of irreducible complex characters of $\GL_2(q)$ over $\Q$. We begin with Lemma~\ref{SC}

\begin{lemma}\cite[Lemma~9.17]{I}\label{SC}
		Let $G$ be a finite group and $\chi \in \irr(G)$. Denote by $E(\chi)$ the Galois conjugacy class of $\chi$ over $\mathbb{Q}$. Then
		\[
		|E(\chi)| = [\mathbb{Q}(\chi) : \mathbb{Q}].
		\]
	\end{lemma}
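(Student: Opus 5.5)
The statement is the standard fact from Isaacs that the Galois conjugacy class of $\chi$ has exactly $[\mathbb{Q}(\chi):\mathbb{Q}]$ elements. I would prove it by an orbit--stabilizer argument for a Galois group acting on $\irr(G)$.

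First I fix a large enough cyclotomic field. Put $N=|G|$ and $\mathbb{F}=\mathbb{Q}(\zeta_N)$. Every value $\chi(g)$ is a sum of $N$-th roots of unity, so $\mathbb{Q}(\chi)\subseteq\mathbb{F}$ for every $\chi\in\irr(G)$. Since $\mathbb{F}/\mathbb{Q}$ is abelian, every subfield, and in particular $\mathbb{Q}(\chi)$, is Galois over $\mathbb{Q}$.

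Next I let $\mathcal{G}=\Gal(\mathbb{F}/\mathbb{Q})$ act on class functions by $\chi^\sigma(g)=\sigma(\chi(g))$, and I check that this action preserves $\irr(G)$. Let $\rho$ be a representation realized over $\mathbb{F}$ (possible by Brauer's splitting theorem); then applying $\sigma$ entrywise gives a representation affording $\chi^\sigma$. Alternatively, $\chi^\sigma$ is a character, and $[\chi^\sigma,\chi^\sigma]=\sigma([\chi,\chi])=1$, so it is irreducible.

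The key identification is that $E(\chi)$ is the $\mathcal{G}$-orbit of $\chi$.
\begin{itemize}
\item Any $\tau\in\Gal(\mathbb{Q}(\chi)/\mathbb{Q})$ lifts to some $\sigma\in\mathcal{G}$, because $\mathbb{Q}(\chi)/\mathbb{Q}$ is Galois and restriction $\mathcal{G}\to\Gal(\mathbb{Q}(\chi)/\mathbb{Q})$ is surjective.
\item Conversely, $\chi^\sigma$ depends only on $\sigma|_{\mathbb{Q}(\chi)}$.
\item Also $\mathbb{Q}(\chi^\sigma)=\sigma(\mathbb{Q}(\chi))=\mathbb{Q}(\chi)$, since $\mathbb{Q}(\chi)$ is Galois.
\end{itemize}
So the conjugates $\chi^\tau$ with $\tau\in\Gal(\mathbb{Q}(\chi)/\mathbb{Q})$, which is the set in the paper's definition, are exactly the orbit $\chi^{\mathcal{G}}$.

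By orbit--stabilizer, $|E(\chi)|=[\mathcal{G}:\mathcal{G}_\chi]$. The stabilizer $\mathcal{G}_\chi$ consists of the $\sigma$ that fix every value $\chi(g)$, which are precisely the $\sigma$ fixing $\mathbb{Q}(\chi)$ pointwise. Hence $\mathcal{G}_\chi=\Gal(\mathbb{F}/\mathbb{Q}(\chi))$. The Galois correspondence then gives
\[
[\mathcal{G}:\mathcal{G}_\chi]=[\mathbb{Q}(\chi):\mathbb{Q}],
\]
so $|E(\chi)|=[\mathbb{Q}(\chi):\mathbb{Q}]$.

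The only non-formal point is that $\chi^\sigma$ is again irreducible, and I would settle it with the inner-product computation above.
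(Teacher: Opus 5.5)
Your proposal is correct and is the standard argument. The paper gives no proof of its own, only the citation to Isaacs' Lemma~9.17, and that lemma is proved exactly as you do: identify the stabilizer of $\chi$ in the Galois group of a cyclotomic field containing $\mathbb{Q}(\chi)$ with $\Gal(\mathbb{F}/\mathbb{Q}(\chi))$, then apply orbit--stabilizer and the Galois correspondence. Two details to tidy: the inner-product check of irreducibility still needs your first step (applying $\sigma$ entrywise to a realization over $\mathbb{F}$) to know that $\chi^\sigma$ is a character at all, and the identity $[\chi^\sigma,\chi^\sigma]=\sigma([\chi,\chi])$ should be justified via $\overline{\chi(g)}=\chi(g^{-1})$.
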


\begin{lemma}\label{lemma:galois}
Let $G=\GL_2(q)$, where $q$ is a prime power, and let
\[
\chi \in \left\{\,
\chi^{(n)}_1,\,
\chi^{(n)}_q,\,
\chi^{(m,n)}_{q+1}\ (0\le n<m\le q-2),\,
\chi^{(n)}_{q-1}\ (n\in E_q,\ (q+1)\nmid n)
\,\right\}.
\]
For any integer $d\ge1$, let $\zeta_d$ denote a primitive $d$-th root of unity in $\mathbb{C}$. Then the distinct Galois conjugacy classes of irreducible complex characters of $G$ over $\Q$ are described as follows.

\begin{enumerate}
\item \textbf{Case $(\chi=\chi^{(n)}_1)$.}  
In this case, for each divisor $d$ of $q-1$, there is exactly one Galois conjugacy class having a representative $\chi$ such that $\Q(\chi)=\Q(\zeta_d)$. Consequently, this family of irreducible complex characters of $G$ decomposes into $\tau(q-1)$ distinct Galois conjugacy classes.

\item \textbf{Case $(\chi=\chi^{(n)}_q)$.}  
In this case, for each divisor $d$ of $q-1$, there is exactly one Galois conjugacy class having a representative $\chi$ such that $\Q(\chi)=\Q(\zeta_d)$. Hence, this family of irreducible complex characters of $G$ also splits into $\tau(q-1)$ distinct Galois conjugacy classes.

\item \textbf{Case $(\chi=\chi^{(m,n)}_{q+1})$.}  
Let $U=(\mathbb{Z}/(q-1)\mathbb{Z})^\times$ act on the set $X$ of unordered pairs $(m,n)$ with $0 \le m, n \le q-2$ and $m \neq n$ by multiplication modulo $q-1$. Let $\mathcal{O}$ be a set of representatives for the orbits of this action. For each $(m,n)\in \mathcal{O}$, there is exactly one Galois conjugacy class having a representative $\chi=\chi^{(m,n)}_{q+1}$ such that
\[
\Q(\chi)=\Q\bigl(\zeta_{q-1}^{\,m+n},\,\zeta_{q-1}^{\,m}+\zeta_{q-1}^{\,n}\bigr).
\]
Furthermore, if $q-1=\prod_{i=1}^r p_i^{e_i}$ is the prime factorisation of $q-1$, then the number of distinct Galois conjugacy classes of principal series characters of $G$ is
\[
\frac{1}{\phi(q-1)}
\sum_{k\in (\mathbb{Z}/(q-1)\mathbb{Z})^\times}
\left(
\binom{\gcd(k-1,q-1)}{2}
+
\frac{1}{2}
\bigl(
\gcd(k^2-1,q-1)-\gcd(k-1,q-1)
\bigr)
\right).
\]
In particular, this number depends only on the prime factorisation of $q-1$.

\item \textbf{Case $(\chi=\chi^{(n)}_{q-1})$.}  
In this case, for each divisor $d$ of $q^2-1$ satisfying $d\nmid(q-1)$, there is exactly one Galois conjugacy class having a representative $\chi$ such that $\Q(\chi)=\Q(\zeta_d+\zeta_d^{\,q})$. Therefore, the cuspidal characters of $G$ split into
\[
\tau(q^2-1)-\tau(q-1)
\]
distinct Galois conjugacy classes.
\end{enumerate}
\end{lemma}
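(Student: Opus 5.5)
The plan is to read off, from the character table and Lemma~\ref{lemma:CharacterField}, how $\Gal(\Q(\zeta_{q^2-1})/\Q)\cong(\mathbb{Z}/(q^2-1)\mathbb{Z})^\times$ acts on each family of irreducible characters. For $\sigma_k\colon\zeta\mapsto\zeta^k$ with $\gcd(k,q^2-1)=1$, Table~\ref{table-gl(2)} gives
\[
(\chi^{(n)}_1)^{\sigma_k}=\chi^{(kn)}_1,\quad (\chi^{(n)}_q)^{\sigma_k}=\chi^{(kn)}_q,\quad (\chi^{(m,n)}_{q+1})^{\sigma_k}=\chi^{(km,kn)}_{q+1},\quad (\chi^{(n)}_{q-1})^{\sigma_k}=\chi^{(kn)}_{q-1},
\]
with indices read modulo $q-1$ (resp. $q^2-1$) and, for the last two families, up to the obvious identifications (unordered pairs, respectively the relation $\sim$ defining $E_q$). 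The action of $k$ on the $\varepsilon$-indexed families factors through the surjection $(\mathbb{Z}/(q^2-1))^\times\to(\mathbb{Z}/(q-1))^\times$. Hence in each family the Galois classes are exactly the orbits of a unit group on the parameter set, and each orbit has size $[\Q(\chi):\Q]$ by Lemma~\ref{SC}.

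For (1) and (2), the parameter $n\in\mathbb{Z}/(q-1)$ lies in a unit orbit that is determined by $\gcd(n,q-1)$, that is, by the order $d=(q-1)/\gcd(n,q-1)$ of $\varepsilon^n$. Since $\Q(\chi)=\Q(\varepsilon^n)=\Q(\zeta_d)$, there is one class for each $d\mid q-1$, giving $\tau(q-1)$ classes. For (4), with $d$ the order of $\eta^n$, the orbits of units on $n\in\mathbb{Z}/(q^2-1)$ again correspond to divisors $d\mid q^2-1$. Passing to $E_q$ identifies $n$ with $qn$, and since $q$ is itself a unit, this identification lies inside the orbit and does not merge distinct orbits. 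The condition $(q+1)\nmid n$ is equivalent to $\eta^n\notin\mathbb{F}_q$-type roots, i.e.\ to $d\nmid q-1$. Lemma~\ref{lemma:CharacterField} gives the field $\Q(\zeta_d+\zeta_d^q)$. Counting divisors of $q^2-1$ that do not divide $q-1$ gives $\tau(q^2-1)-\tau(q-1)$.

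For (3), the classes are by the same reasoning the $U$-orbits on unordered pairs $\{m,n\}$ with $m\ne n$, and Lemma~\ref{lemma:CharacterField} gives the field. The remaining task is the orbit count, which I would obtain by Burnside's lemma: the number of orbits is $\frac{1}{\varphi(q-1)}\sum_{k\in U}|\mathrm{Fix}(k)|$. An unordered pair is fixed by $k$ in one of two ways. Either $km=m$ and $kn=n$: the solutions of $(k-1)x\equiv0$ number $\gcd(k-1,q-1)$, giving $\binom{\gcd(k-1,q-1)}{2}$ pairs. Or $km=n$ and $kn=m$ with $m\ne n$: then $(k^2-1)m\equiv0$ but $(k-1)m\not\equiv0$, and each such pair is counted twice among the admissible $m$, giving $\frac12\bigl(\gcd(k^2-1,q-1)-\gcd(k-1,q-1)\bigr)$ pairs. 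These two cases are disjoint because $m\ne n$.

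The displayed formula depends only on $q-1$. I expect the main obstacle to be the careful bookkeeping in (4), namely checking that the $E_q$ identification and the exclusion $(q+1)\nmid n$ correspond exactly to divisors $d\nmid q-1$. The key fact there is that $\eta^n$ has order dividing $q-1$ if and only if $(q+1)\mid n$.
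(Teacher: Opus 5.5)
Your proposal is correct and follows essentially the same route as the paper: the Galois action multiplies the parameters by units (via $(\mathbb{Z}/(q^2-1)\mathbb{Z})^\times \to (\mathbb{Z}/(q-1)\mathbb{Z})^\times$ for the $\varepsilon$-families), the orbits are indexed by the order $d$ of $\varepsilon^n$ or $\eta^n$, and the principal-series count is the same Burnside fixed-point computation. One point in your favour is that you separate the classes by the order $d$ rather than by the character field, and that is the right invariant. Distinct $d$ can give the same field: for $q=5$, both $d=3$ and $d=6$ give $\Q(\zeta_d+\zeta_d^{\,q})=\Q$. So the paper's remark that different $d$ produce different fields is false in general, and your argument does not depend on it.
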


\begin{proof}
We prove each part separately.
\begin{enumerate}
    \item Let $\chi=\chi^{(n)}_1$. By Lemma~\ref{lemma:CharacterField}, we have
    \[
    \Q(\chi)=\Q(\chi^{(n)}_1)=\Q(\varepsilon^n)=\Q(\zeta_d),
    \]
    where $d=\frac{q-1}{\gcd(n,q-1)}$, and $\varepsilon$ is a primitive $(q-1)$-th root of unity in $\mathbb{C}$. The Galois group
    \[
    \Gal(\Q(\zeta_d)/\Q)\cong(\mathbb{Z}/d\mathbb{Z})^\times
    \]
    acts transitively on the primitive $d$-th roots of unity. Hence, all characters in this family with character field $\Q(\zeta_d)$ belong to a single Galois orbit. Therefore, for each divisor $d$ of $q-1$, there is exactly one Galois conjugacy class having a representative $\chi$ such that $\Q(\chi)=\Q(\zeta_d)$. Consequently, this family splits into $\tau(q-1)$ distinct Galois conjugacy classes.

    \item The proof of this part is analogous to that of the previous case.

    \item By Lemma~\ref{lemma:CharacterField}, we have
    \[
    \Q(\chi^{(m,n)}_{q+1})
    =
    \Q(\varepsilon^{m+n},\varepsilon^m+\varepsilon^n)
    \subseteq
    \Q(\varepsilon),
    \]
    where $\varepsilon$ is a primitive $(q-1)$-th root of unity in $\mathbb{C}$. Observe that $\Q(\varepsilon)$ is the cyclotomic field of $(q-1)$-st roots of unity, and
    \[
    \Gal(\Q(\varepsilon)/\Q)
    \cong
    (\Z/(q-1)\Z)^\times,
    \]
    where $k\in(\Z/(q-1)\Z)^\times$ acts via
    \[
    \varepsilon\mapsto\varepsilon^k.
    \]

    Let $\chi=\chi^{(m,n)}_{q+1}$, and let $\sigma\in\Gal(\Q(\chi)/\Q)$. Since $\Q(\chi)\subseteq\Q(\varepsilon)$ and $\Q(\varepsilon)/\Q$ is Galois, the automorphism $\sigma$ extends to an automorphism $\widetilde{\sigma}$ of $\Q(\varepsilon)$. This extension corresponds to some $k\in(\Z/(q-1)\Z)^\times$ satisfying
    \[
    \widetilde{\sigma}(\varepsilon)=\varepsilon^k.
    \]
    From the character table, each value $\chi(g)$ is a polynomial in $\varepsilon^m$ and $\varepsilon^n$ with integer coefficients. Therefore,
    \[
    \sigma(\chi(g))
    =
    \widetilde{\sigma}(\chi(g))
    =
    \chi^{(km,kn)}_{q+1}(g),
    \]
    and hence
    \[
    \chi^\sigma=\chi^{(km,kn)}_{q+1}.
    \]

    Conversely, let $k\in(\Z/(q-1)\Z)^\times$. The automorphism of $\Q(\varepsilon)$ defined by $\varepsilon\mapsto\varepsilon^k$ sends
    \[
    \varepsilon^{m+n}\mapsto\varepsilon^{k(m+n)}
    \quad \text{and} \quad
    \varepsilon^m+\varepsilon^n
    \mapsto
    \varepsilon^{km}+\varepsilon^{kn}.
    \]
    Since $\Q(\chi^{(m,n)}_{q+1})=\Q(\varepsilon^{m+n},\varepsilon^m+\varepsilon^n)$,
    this automorphism restricts to an element of $\Gal(\Q(\chi)/\Q)$ sending $\chi^{(m,n)}_{q+1}$ to $\chi^{(km,kn)}_{q+1}$.

    Therefore, the Galois orbit of $\chi^{(m,n)}_{q+1}$ is precisely
    \[
    \left\{
    \chi^{(km,kn)}_{q+1}
    \,:\,
    k\in(\Z/(q-1)\Z)^\times
    \right\}.
    \]

    Since $\chi^{(m,n)}_{q+1}=\chi^{(n,m)}_{q+1}$, two characters $\chi^{(m,n)}_{q+1}$ and $\chi^{(m',n')}_{q+1}$ are Galois conjugate if and only if there exists $k\in(\Z/(q-1)\Z)^\times$ such that
    \[
    (m',n')\equiv(km,kn)\pmod{q-1},
    \]
    up to permutation of the entries. Hence, the Galois conjugacy classes are in bijection with the orbits of
    \[
    U=(\Z/(q-1)\Z)^\times
    \]
    acting on the set $X$ of unordered pairs $(m,n)$ with $0\le n<m\le q-2$.

    We now count the number of orbits using Burnside's lemma. For each $k\in U$, let $\operatorname{Fix}(k)$ denote the number of unordered pairs $(m,n)\in X$ satisfying
    \[
    (km,kn)\equiv(m,n)\pmod{q-1}.
    \]
    This occurs either when both $m$ and $n$ are fixed individually, or when they are interchanged.

    Put $d=q-1$. The congruence
    \[
    km\equiv m\pmod d
    \]
    is equivalent to
    \[
    (k-1)m\equiv0\pmod d.
    \]
    Hence, the number of solutions $m\in\{0,1,\dots,d-1\}$ is $\gcd(k-1,d)$. Therefore, the number of unordered pairs $(m,n)$ with $m\neq n$ and both fixed is
    \[
    \binom{\gcd(k-1,d)}{2}.
    \]

    Next, suppose
    \[
    km\equiv n\pmod d
    \quad \text{and} \quad
    kn\equiv m\pmod d,
    \]
    with $m\neq n$. Then
    \[
    k^2m\equiv m\pmod d,
    \]
    so
    \[
    (k^2-1)m\equiv0\pmod d.
    \]
    The number of solutions is $\gcd(k^2-1,d)$. Among these, the solutions satisfying
    \[
    km\equiv m\pmod d
    \]
    correspond to the case $m=n$, and there are $\gcd(k-1,d)$ such solutions. Since each unordered pair is counted twice, the number of unordered pairs interchanged by $k$ is
    \[
    \frac{1}{2}\bigl(\gcd(k^2-1,d)-\gcd(k-1,d)\bigr).
    \]

    Consequently, we get
    \[
    \operatorname{Fix}(k)
    =
    \binom{\gcd(k-1,d)}{2}
    +
    \frac{1}{2}
    \bigl(
    \gcd(k^2-1,d)-\gcd(k-1,d)
    \bigr).
    \]

    By Burnside's lemma, the number of distinct orbits is
    \[
    \frac{1}{|U|}
    \sum_{k\in U}\operatorname{Fix}(k).
    \]
    Since $|U|=\phi(d)=\phi(q-1)$, we obtain the required formula. Moreover, the quantities $\gcd(k-1,d)$ and $\gcd(k^2-1,d)$ depend only on the prime-power factorisation of $d$, and hence the result depends only on the prime factorisation of $q-1$.

    \item Let $\chi=\chi^{(n)}_{q-1}$. By Lemma~\ref{lemma:CharacterField}, we have
    \[
    \Q(\chi)
    =
    \Q(\chi^{(n)}_{q-1})
    =
    \Q(\eta^n+\eta^{nq}),
    \]
    where $\eta$ is a primitive $(q^2-1)$-th root of unity in $\mathbb{C}$. Let $d=\frac{q^2-1}{\gcd(n,q^2-1)}$. Then $\eta^n$ is a primitive $d$-th root of unity, say $\zeta_d$. The condition $(q+1)\nmid n$ is equivalent to $d\nmid(q-1)$.

    Since $q^2\equiv1\pmod d$ and $q\not\equiv1\pmod d$, the map $\zeta_d\mapsto\zeta_d^q$ defines a nontrivial involution. Its fixed field is $\Q(\zeta_d+\zeta_d^q)$. All values of $\chi$ lie in this field, and
    \[
    \chi^{(n)}_{q-1}(B_1(1))
    =
    -(\zeta_d+\zeta_d^q)
    \]
    generates it. Hence, we get $\Q(\chi)=\Q(\zeta_d+\zeta_d^q)$. The group $(\mathbb{Z}/(q^2-1)\mathbb{Z})^\times$ acts transitively on the primitive $d$-th roots of unity, so all characters corresponding to the same value of $d$ are Galois conjugate. Different values of $d$ produce different fields. Therefore, for each divisor $d$ of $q^2-1$ satisfying $d\nmid(q-1)$, there is exactly one Galois conjugacy class having a representative $\chi$ such that
    \[
    \Q(\chi)=\Q(\zeta_d+\zeta_d^q).
    \]
    The number of such divisors is $\tau(q^2-1)-\tau(q-1)$. Hence, the result follows.
    \qedhere
\end{enumerate}
\end{proof}

The notion of the Schur index was introduced by Schur in 1906. Let $G$ be a finite group and let $\chi$ be an absolutely irreducible character of $G$. Let $\mathbb{F}$ be a field, and let $\psi$ denote the sum of the Galois conjugates of $\chi$ over $\mathbb{F}$. Then $\psi$ takes values in $\mathbb{F}$. The smallest integer $m>0$ such that $m\psi$ is afforded by an $\mathbb{F}$-representation of $G$ is called the \emph{Schur index} of $\chi$ over $\mathbb{F}$, and is denoted by $m_{\mathbb{F}}(\chi)$.

An equivalent interpretation of the Schur index is given in terms of representation fields. There exists a field $\mathbb{E}$ containing $\mathbb{F}(\chi)$ such that $G$ admits an $\mathbb{E}$-representation affording the character $\chi$, and
\[
[\mathbb{E}:\mathbb{F}(\chi)]
=
m_{\mathbb{F}}(\chi).
\]
Moreover, $m_{\mathbb{F}}(\chi)$ is the minimum possible degree among all such field extensions.

Gow~\cite{Gow} computed the Schur index $m_\Q(\chi)$ over the rational field for irreducible complex characters $\chi$ of $\GL_n(q)$, and proved that $m_\Q(\chi)=1$ for $n\le4$. Subsequently, Ohmori~\cite{Ohmori} extended Gow's result to arbitrary $n$ when $q$ is a power of an odd prime.

\begin{lemma}\cite[Theorem~4]{Gow}\label{lemma:schurindexGL}
Let $G=\GL_2(q)$, where $q$ is a prime power. Then $m_{\Q}(\chi)=1$ for every $\chi\in \irr(G)$.
\end{lemma}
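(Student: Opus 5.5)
The plan is a local–global argument. Let $\chi\in\irr(G)$ be non-linear. Write $D_\chi$ for the division algebra over $\mathbb{K}=\Q(\chi)$ attached to the simple component of $\Q G$ containing $\chi$, so that $m_{\Q}(\chi)^2=[D_\chi:\mathbb{K}]$. By the Hasse--Brauer--Noether theorem, $D_\chi$ is trivial as soon as all its local invariants vanish. The linear characters $\chi^{(n)}_1$ need no argument, since they are realised over $\Q(\varepsilon^n)=\Q(\chi)$. The tool for the rest is the standard fact that if $H\le G$, $\psi\in\irr(H)$ and $\langle \psi^G,\chi\rangle=1$, then $D_\chi$ is split by $\mathbb{K}(\psi)$ (see \cite[Lemma~10.4 and Theorem~10.2]{I}). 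So it suffices to exhibit, for each $\chi$, two such pairs $(H_1,\psi_1)$ and $(H_2,\psi_2)$ whose fields $\Q(\psi_1)$ and $\Q(\psi_2)$ are cyclotomic and ramify at disjoint sets of finite primes.

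\textbf{First splitting field.} Let $U$ be the unipotent radical of the Borel subgroup $B$, so $U\cong(\mathbb{F}_q,+)$. Fix a non-trivial character $\mu$ of $U$; then $\Q(\mu)=\Q(\zeta_p)$, and $\Gamma=\mu^G$ is the Gelfand--Graev character. It is classical, and can be checked directly from Table~\ref{table-gl(2)} by computing $\langle \chi|_U,\mu\rangle$, that every non-linear $\chi$ occurs in $\Gamma$ with multiplicity exactly one. Hence $D_\chi$ is split by $\mathbb{K}(\zeta_p)$. Consequently the local invariants of $D_\chi$ vanish at every finite place of $\mathbb{K}$ not above $p$.

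\textbf{Second splitting field.} This pair must avoid $p$, and I would treat the families separately.
\begin{enumerate}
\item For $\chi^{(m,n)}_{q+1}$, take $H=B$ and $\psi=\lambda$, the inflation of the diagonal character $\mathrm{diag}(x,y)\mapsto\varepsilon^{m\log x+n\log y}$. Then $\lambda^G=\chi^{(m,n)}_{q+1}$ and $\Q(\lambda)\subseteq\Q(\zeta_{q-1})$.
\item For $\chi^{(n)}_q$ there is nothing to do. Since $1_B^G=1+\chi^{(0)}_q$ is a rational permutation character, $\chi^{(n)}_q=\chi^{(0)}_q\,\chi^{(n)}_1$ is afforded by a representation over $\Q(\varepsilon^n)=\Q(\chi)$.
\item For the cuspidal $\chi^{(n)}_{q-1}$, take $H=T\cong\mathbb{F}_{q^2}^\times$, the non-split torus, which is cyclic of order $q^2-1$. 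From the values on $A_1(a)$, $A_2(a)$ and $B_1(a)$ I expect that $\chi^{(n)}_{q-1}|_T$ is the sum of the characters $\theta'$ of $T$ with the same central character as $\eta^n$ and $\theta'\ne\eta^{\pm n},\eta^{nq}$-type, each with multiplicity one. Choosing such a $\theta'$ gives $\Q(\theta')\subseteq\Q(\zeta_{q^2-1})$.
\end{enumerate}
In both non-trivial cases the splitting field ramifies only at primes dividing $q^2-1$, which are coprime to $p$. So the local invariants of $D_\chi$ also vanish at every finite place above $p$.

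\textbf{Archimedean places and conclusion.} At a real place of $\mathbb{K}$, the local index is at most $2$, and it is $1$ unless $\chi$ is real-valued with Frobenius--Schur indicator $-1$. For $\GL_2(q)$, every real-valued irreducible character has indicator $+1$. I would check this directly: it is immediate for $\chi^{(n)}_1$, $\chi^{(n)}_q$ and $\chi^{(m,n)}_{q+1}$, since these are built from the rational characters $1_B^G$ and $\lambda^G$. For the cuspidal family, I would compute $\frac{1}{|G|}\sum_g\chi(g^2)$ using the class sizes, or cite Prasad's result for $\GL_n(q)$. All local invariants then vanish, so $D_\chi=\mathbb{K}$ and $m_{\Q}(\chi)=1$.

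\textbf{Main obstacle.} The step I expect to be hardest is the multiplicity-one claim for the cuspidal characters restricted to $T$, that is, choosing $\theta'$ correctly and verifying $\langle\chi^{(n)}_{q-1}|_T,\theta'\rangle=1$ from the table. The real-place indicator computation for the cuspidal family comes second. If the torus computation fails, the fallback is to use $H=ZU$ with $\psi=\omega\otimes\mu$. This handles multiplicity but reintroduces $p$, so I would instead use Benard--Schacher, which gives $\zeta_{m}\in\mathbb{K}$ for $m=m_{\Q}(\chi)$. Combined with $m\mid p-1$, coming from the Gelfand--Graev pair, and $m\le 2$ at the relevant places, this should force $m=1$.
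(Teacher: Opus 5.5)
The paper does not prove this lemma; it quotes \cite[Theorem~4]{Gow}. Your proposal therefore has to stand on its own, and it has a genuine gap at the local--global step. Twice you use the principle that if a field $L\supseteq\mathbb{K}$ splits $D_\chi$ and $L/\mathbb{K}$ is unramified at a finite place $v$, then $\mathrm{inv}_v(D_\chi)=0$. This is false. The local index at $v$ only divides the local degree $[L_w:\mathbb{K}_v]$, and an unramified extension of local degree $f$ splits every local division algebra whose index divides $f$. So the Gelfand--Graev pair only tells you that, for $v\nmid p$, the local index at $v$ divides the residue degree of $\mathbb{K}(\zeta_p)/\mathbb{K}$ at $v$. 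That degree is in general larger than $1$ at places above primes $\ell\mid q^2-1$. In the same way, $\Q(\lambda)$ or $\Q(\theta')$ tells you nothing conclusive at the places above $p$.

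``Disjoint ramification'' is not a sufficient criterion even for Schur algebras. Take $G=C_3\rtimes C_4$ and its faithful rational character $\chi$ of degree $2$. Then $\chi=\psi^G$ with $\psi$ faithful linear on $C_6$ and $\Q(\psi)=\Q(\zeta_3)$, and $\langle\chi|_{C_4},\lambda\rangle=1$ for $\lambda$ faithful on $C_4$ with $\Q(\lambda)=\Q(i)$. Yet $D_\chi\cong(-1,-3)_{\Q}$ has invariant $\tfrac12$ at $3$, although $\Q(i)$ splits it and is unramified at $3$. Your fallback does not close the gap either. For odd $p$, $m=2$ is compatible with both $\zeta_m\in\mathbb{K}$ and $m\mid p-1$, and you never establish $m\le 2$. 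For $p=2$ your first step alone does suffice, since then $\Q(\mu)=\Q$ and the Gelfand--Graev character is rational.

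The way out is to avoid local invariants altogether. For each $\chi$, find a character $\psi$ of a subgroup, afforded by a $\mathbb{K}$-representation, with $\langle\psi^G,\chi\rangle=1$; then $m_{\mathbb{K}}(\chi)=1$ at once. Your treatment of $\chi^{(n)}_1$ and $\chi^{(n)}_q$ already does this.

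For $\chi^{(m,n)}_{q+1}$: if $\Q(\lambda)=\mathbb{K}$, take $\psi=\lambda$. Otherwise you are in the swapping case of Lemma~\ref{lemma:EqualFields}, which forces $\mu\neq 1\neq\nu$. On the diagonal torus $T$ the table gives $\chi|_T=\mu\otimes\nu+\nu\otimes\mu+\mathrm{Ind}_Z^T(\mu\nu)$. Hence $\theta=(\mu\nu)\otimes 1$ occurs exactly once in $\chi|_T$, and $\Q(\theta)=\Q(\varepsilon^{m+n})\subseteq\mathbb{K}$.

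For the cuspidal $\chi=\chi^{(n)}_{q-1}$, your torus multiplicity computation is correct, but $\Q(\theta')$ need not lie in $\mathbb{K}$. Instead, the table gives $\langle\chi|_B,\chi|_B\rangle=1$. Write $B=Z\times P$, where $P$ is the affine group of matrices in $B$ with $(2,2)$-entry $1$. Then $\chi|_B$ is the central character $\omega$ times the unique degree-$(q-1)$ character of $P$. That character is the doubly transitive permutation character of $P$ on $\mathbb{F}_q$ minus $1_P$, so it is rational. Hence $\chi|_B$ is realizable over $\Q(\omega)\subseteq\mathbb{K}$, and $\langle(\chi|_B)^G,\chi\rangle=1$. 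This also makes your archimedean step unnecessary.
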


Let $G$ be a finite group, and let $\chi\in \irr(G)$. By Schur theory, there exists an irreducible $\Q$-representation $\rho$ of $G$ affording the character $\Omega(\chi)$. Conversely, if $\rho$ is an irreducible $\Q$-representation of $G$, then there exists $\chi\in \Irr(G)$ such that $\Omega(\chi)$ is the character afforded by $\rho$. Moreover,
\[
\deg\rho
=
\Omega(\chi)(1)
=
m_{\Q}(\chi)\,[\Q(\chi):\Q]\,\chi(1)
\]
(see Lemma~\ref{SC}). Hence, distinct Galois conjugacy classes correspond to distinct irreducible rational representations of $G$.

We are now ready to prove Theorem~\ref{thm:RationRepsGL2}.

\begin{proof}[Proof of Theorem~\ref{thm:RationRepsGL2}]
Let $G=\GL_2(q)$, where $q$ is a prime power. Observe that the distinct Galois conjugacy classes of $\irr(G)$ over $\Q$ correspond bijectively to the distinct irreducible rational representations of $G$. Let $\chi\in \irr(G)$ be a representative of a Galois conjugacy class over $\Q$, and let $\rho$ be the corresponding irreducible $\Q$-representation of $G$ affording the character $\Omega(\chi)$. Then
\[
\chi_\rho(1)
=
\Omega(\chi)(1)
=
m_{\Q}(\chi)\,[\Q(\chi):\Q]\,\chi(1).
\]
By Lemma~\ref{lemma:schurindexGL}, we have $m_{\Q}(\chi)=1$ for all $\chi\in\irr(G)$. We now prove each part separately.

\begin{enumerate}
    \item By Lemma~\ref{lemma:galois}, for each divisor $d$ of $q-1$, there is exactly one Galois conjugacy class having a representative $\chi$ with $\chi(1)=1$ and $\Q(\chi)=\Q(\zeta_d)$. Hence, for each divisor $d\mid(q-1)$, there exists exactly one simple $\Q G$-module, up to isomorphism, of dimension $\varphi(d)$
    affording the character $\Omega(\chi^{(n)}_1)$, where $d=\frac{q-1}{\gcd(n,q-1)}$.

    \item Again by Lemma~\ref{lemma:galois}, for each divisor $d$ of $q-1$, there is exactly one Galois conjugacy class having a representative $\chi$ with $\chi(1)=q$ and $\Q(\chi)=\Q(\zeta_d)$. Therefore, for each divisor $d\mid(q-1)$, there exists exactly one simple $\Q G$-module, up to isomorphism, of dimension $q\varphi(d)$ affording the character $\Omega(\chi^{(n)}_q)$, where $d=\frac{q-1}{\gcd(n,q-1)}$.

    \item By Lemma~\ref{lemma:galois}, for each divisor $d$ of $q^2-1$ satisfying $d\nmid(q-1)$, there is exactly one Galois conjugacy class having a representative $\chi$ with
    \[
    \chi(1)=q-1
    \quad \text{and} \quad
    \Q(\chi)=\Q(\zeta_d+\zeta_d^{\,q}).
    \]
    Note that $[\Q(\zeta_d+\zeta_d^{\,q}) : \Q]=\frac{1}{2}[\Q(\zeta_d) : \Q]$
    Hence, for each divisor $d\mid(q^2-1)$ with $d\nmid(q-1)$, there exists exactly one simple $\Q G$-module, up to isomorphism, of dimension
    \[
    \frac{q-1}{2}\,\varphi(d)
    \]
    affording the character $\Omega(\chi^{(n)}_{q-1})$, where $d=\frac{q^2-1}{\gcd(n,q^2-1)}$.

    \item Let
    \[
    U=(\mathbb{Z}/(q-1)\mathbb{Z})^\times
    \]
    act on the set $X$ of unordered pairs $(m,n)$ with $0 \le m, n \le q-2$ and $m \neq n$ by multiplication modulo $q-1$, and let $\mathcal{O}$ be a set of representatives for the orbits of this action. By Lemma~\ref{lemma:galois}, for each $(m,n)\in\mathcal{O}$, there is exactly one Galois conjugacy class having a representative
    \[
    \chi=\chi^{(m,n)}_{q+1}
    \]
    such that
    \[
    \Q(\chi)
    =
    \Q(\zeta_{q-1}^{\,m+n},\,\zeta_{q-1}^{\,m}+\zeta_{q-1}^{\,n}),
    \]
    where $\zeta_{q-1}$ is a primitive $(q-1)$-st root of unity. Therefore, for each $(m,n)\in\mathcal{O}$, there exists exactly one simple $\Q G$-module, up to isomorphism, of dimension
    \[
    (q+1)\,[\Q(\zeta_{q-1}^{\,m+n},\,\zeta_{q-1}^{\,m}+\zeta_{q-1}^{\,n}):\Q]
    \]
    affording the character $\Omega(\chi^{(m,n)}_{q+1})$.

    \item Finally, observe that the counting carried out in the preceding four parts of the theorem exhausts all irreducible rational representations of $G$. Therefore, from Lemma~\ref{lemma:galois}, we obtain
    \begin{align*}
    |\irr_{\Q}(G)|
    &=
    \tau(q-1)
    +
    \tau(q-1)
    +
    \bigl(\tau(q^2-1)-\tau(q-1)\bigr)
    +
    |\mathcal{O}| \\
    &=
    \tau(q-1)+\tau(q^2-1)+|\mathcal{O}|.
    \end{align*}
\end{enumerate}

This completes the proof of Theorem~\ref{thm:RationRepsGL2}.
\end{proof}

\section{Rational character table of \(\mathrm{GL}_2(q)\)}\label{sec:RatCharTab}

In this section, we briefly introduce the rational character table of $G=\mathrm{GL}_2(q)$. We retain the notation of Section~\ref{sec:conj-char}. Thus, throughout this section $\varepsilon$ is a primitive $(q-1)$-th root of unity and $\eta$ is a primitive $(q^{2}-1)$-th root of unity.

Let $C_G(x)$ denote the ordinary conjugacy class of $x\in G$. For a finite-dimensional representation $\rho$ of $G$ over a field $\mathbb F$ of characteristic $0$, let $\chi_\rho$ denote its character. Two elements $x,y\in G$ are called $\mathbb F$-conjugate if $\chi_\rho(x)=\chi_\rho(y)$ for every such $\rho$. Write $C_{\mathbb F}(x)$ for the $\mathbb F$-conjugacy class of $x$. For $\mathbb F=\mathbb Q$, one has
\[
C_{\mathbb Q}(x)=\bigcup_{\substack{m\geq 1\\(m,|x|)=1}} C_G(x^m),
\]
where $|x|$ denotes the order of $x$ in $G$. Furthermore, by the Witt–Berman theorem, the number of $\mathbb Q$-conjugacy classes equals the number of irreducible $\mathbb Q$-representations of $G$ (see \cite[Theorem~42.8]{Curtis-Reiner}).

By Lemma~\ref{lemma:schurindexGL}, every irreducible complex character of $G$ has Schur index $1$ over $\mathbb Q$. Hence, the irreducible rational character attached to $\chi\in\operatorname{Irr}(G)$ is
\[
\Omega(\chi)=\sum_{\sigma\in \operatorname{Gal}(\mathbb Q(\chi)/\mathbb Q)}\chi^{\sigma}.
\]
From now onward, we also call the $\mathbb Q$-conjugacy class of an element $x\in G$ the rational conjugacy class of $x$. The rational conjugacy classes of $G=\mathrm{GL}_2(q)$ are indexed as follows. For $A_1(a)$ and $A_2(a)$, the rational conjugacy classes are indexed by $d\mid q-1$, where
\[
d=\frac{q-1}{\gcd(a,q-1)}.
\]
Let $U=(\mathbb{Z}/(q-1)\mathbb{Z})^\times$ act on the set $X$ of unordered pairs $(m,n)$ with $0\le m,n\le q-2$ and $m\neq n$ by multiplication modulo $q-1$. For $A_3(a,b)$, the rational conjugacy classes are indexed by a set of representatives $\mathcal O$ of the orbits of the action of the group $U$ on $X$. For $B_1(a)$, they are indexed by $d\mid q^2-1$ with $d\nmid q-1$, where
\[
d=\frac{q^2-1}{\gcd(a,q^2-1)}.
\]

We now define the Ramanujan sum for $d\mid q-1$ as
\[
c_d(r)=\sum_{\substack{1\le t\le d\\(t,d)=1}}\zeta_d^{\,t r}.
\]
For a representative $(m,n)\in\mathcal O$, let $\mathcal O_{(m,n)}$ denote the corresponding orbit of the action of $U$ on $X$ and define
\[
P_{\mathcal O_{(m,n)}}(a)=\sum_{(m',n')\in\mathcal O_{(m,n)}}\varepsilon^{(m'+n')a}
\quad\text{and}\quad
Q_{\mathcal O_{(m,n)}}(a,b)=\sum_{(m',n')\in\mathcal O_{(m,n)}}
\bigl(\varepsilon^{m'a+n'b}+\varepsilon^{n'a+m'b}\bigr).
\]
For $d\mid q^2-1$ with $d\nmid q-1$, the Frobenius automorphism $\sigma_q:\zeta_d\mapsto\zeta_d^{\,q}$ has order $2$. To avoid double-counting in the Galois sum, let $R_d\subset(\mathbb Z/d\mathbb Z)^\times$ be a set of representatives for the quotient $(\mathbb Z/d\mathbb Z)^\times/\langle q\rangle$. For classes where the character value is a single power $\zeta_d^{\,tr}$ (as in $A_1,A_2,A_3$ of the fourth row in the rational character table of $G$), set
\[
u_d(r)=\sum_{t\in R_d}\zeta_d^{\,t r}.
\]
For the $B_1(a)$ class, where the value already contains both Frobenius conjugates, set
\[
v_d(r)=\sum_{t\in R_d}\bigl(\zeta_d^{\,t r}+\zeta_d^{\,t q r}\bigr).
\]

\begin{remark}
By elementary number theory, the rational character values of $G$ can, in general, be determined directly without explicitly evaluating the corresponding sums of roots of unity. In particular, the Ramanujan sum can be evaluated as
\[
c_d(r)
=
\mu\!\left(\frac{d}{\gcd(d,r)}\right)
\frac{\varphi(d)}
{\varphi\!\left(d/\gcd(d,r)\right)},
\]
where $\mu$ denotes the Möbius function 
\[
\mu(n)=
\begin{cases}
1, & \text{if }n=1,\\
0, & \text{if }p^2\mid n\text{ for some prime }p,\\
(-1)^k, & \text{if }n\text{ is a product of }k\text{ distinct primes}.
\end{cases}
\]
Furthermore, we have $v_d(r)=c_d(r)$. Thus, both $c_d(r)$ and $v_d(r)$ can be determined directly using the Möbius function and Euler's totient function, without explicitly evaluating the corresponding sums of roots of unity.
\end{remark}

The irreducible rational characters of $G=\GL_2(q)$ are given in Table~\ref{rational table-gl(2)}. In the first two rows, $d\mid q-1$, i.e., $d$ is a positive integer divisor of $q-1$. In the third row, $\mathcal O_{(m,n)}$ runs over the $U$-orbits on $X$; equivalently, the third row is indexed by $(m,n)\in\mathcal O$, where $\mathcal O$ is a set of representatives for the orbits of the action of $U$ on $X$, and $\mathcal O_{(m,n)}$ denotes the orbit corresponding to $(m,n)$. In the fourth row, $d\mid q^2-1$ with $d\nmid q-1$, i.e., $d$ is a positive integer divisor of $q^2-1$ that does not divide $q-1$. All entries are rational integers. The Witt--Berman theorem ensures the completeness of the table.

\begin{table}[h]
\centering
\caption{The rational character table of $G=\GL_2(q)$}\label{rational table-gl(2)}
\[
\begin{array}{c|c|cccc}
\text{Character} & \deg & A_1(a) & A_2(a) & A_3(a,b) & B_1(a)\\ \hline
\Omega(\chi_1^{(d)}) & \varphi(d) & c_d(2a) & c_d(2a) & c_d(a+b) & c_d(a)\\[1.2ex]
\Omega(\chi_q^{(d)}) & q\varphi(d) & q\,c_d(2a) & 0 & c_d(a+b) & -c_d(a)\\[1.2ex]
\Omega(\chi_{q+1}^{(m,n)}) & (q+1)|\mathcal O_{(m,n)}| & (q+1)P_{\mathcal O_{(m,n)}}(a) & P_{\mathcal O_{(m,n)}}(a) & Q_{\mathcal O_{(m,n)}}(a,b) & 0\\[1.2ex]
\Omega(\chi_{q-1}^{(d)}) & \dfrac{q-1}{2}\,\varphi(d) & (q-1)u_d(a(q+1)) & -u_d(a(q+1)) & 0 & -v_d(a)
\end{array}
\]
\end{table}

\section{Matrix representations of $\GL_2(q)$ over $\Q$}\label{sec:MatrixReps}
Let $G=\mathrm{GL}_2(q)$ and let $\chi\in\Irr(G)$. Suppose $\rho:G\to \mathrm{GL}_d(\mathbb{C})$ is an irreducible complex representation of $G$ affording the character $\chi$. Set $\mathbb{L}=\mathbb{Q}(\chi)$ and fix a $\mathbb{Q}$-basis $\{b_1,\dots,b_t\}$ of $\mathbb{L}$, where $t=[\mathbb{L}:\mathbb{Q}]$. For each $g\in G$, write $\rho(g)=(a_{ij}(g))$. By Lemma~\ref{lemma:schurindexGL}, we have $m_{\mathbb{Q}}(\chi)=1$, and hence $a_{ij}(g)\in\mathbb{L}$ for all $i,j$ and $g\in G$.

For each entry $a_{ij}(g)$, let $\M(a_{ij}(g))$ denote the $t\times t$ matrix over $\mathbb{Q}$ corresponding to multiplication by $a_{ij}(g)$ on $\mathbb{L}$ with respect to the basis $\{b_j\}$. Define
\[
\widetilde{\rho}(g)=\bigl(\M(a_{ij}(g))\bigr)_{i,j=1}^{d}.
\]
Then $\widetilde{\rho}(g)$ is a block matrix of size $dt$ with entries in $\mathbb{Q}$. Moreover, $\widetilde{\rho}:G\to \mathrm{GL}_{dt}(\mathbb{Q})$ is a representation satisfying
\[
\operatorname{Tr}_{\mathbb{Q}}\bigl(\widetilde{\rho}(g)\bigr)
=
\sum_{\sigma\in\Gal(\mathbb{L}/\mathbb{Q})}
\sigma\bigl(\operatorname{Tr}_{\mathbb{L}}(\rho(g))\bigr)
=
\sum_{\sigma\in\Gal(\mathbb{L}/\mathbb{Q})}\chi^\sigma(g)
=
\Omega(\chi)(g)
\]
for all $g\in G$. Consequently, $\widetilde{\rho}$ defines an irreducible $\mathbb{Q}$-representation of $G$ affording the character $\Omega(\chi)$.

In this section, we present an alternative approach for constructing irreducible matrix representations of $\GL_2(q)$. Recall that, for a finite group $G$ and $\chi\in\irr(G)$, there exists a unique irreducible $\mathbb{Q}$-representation $\rho$ of $G$ such that $\chi$ occurs as an irreducible constituent of $\rho\otimes_{\mathbb{Q}}\mathbb{F}$ with multiplicity $m_{\mathbb{Q}}(\chi)$, where $\mathbb{F}$ is a splitting field for $G$. In the case of a linear complex character $\psi$ of $G$, an irreducible matrix representation of $G$ over $\mathbb{Q}$ affording the character $\Omega(\psi)$ can be constructed from Lemma~\ref{lemma:YamadaLinear}.

\begin{lemma} \cite[Proposition 1]{Y}\label{lemma:YamadaLinear}
Let $G$ be a finite group, let $\psi\in\lin(G)$, and set $N=\ker(\psi)$ with $n=[G:N]$. Suppose that
\[
G=\bigcup_{i=0}^{n-1}Ny^i.
\]
Then $\psi(xy^i)=\zeta_n^i$, where $x\in N$, $0\leq i<n$, and $\zeta_n$ is a primitive $n$-th root of unity. Let
\[
f(X)=X^s-a_{s-1}X^{s-1}-\cdots-a_1X-a_0
\]
be the irreducible polynomial over $\mathbb{Q}$ satisfying $f(\zeta_n)=0$, where $s=\phi(n)$. Define
\[
\Psi(xy^i)=
\left(
\begin{array}{ccccc}
0 & 1 & 0 & \cdots & 0\\
0 & 0 & 1 & \cdots & 0\\
\vdots & \vdots & \vdots & \ddots & \vdots\\
0 & 0 & \cdots & 0 & 1\\
a_0 & a_1 & \cdots & \cdots & a_{s-1}
\end{array}
\right)^i,
\qquad (0\leq i<n,\ x\in N).
\]
Then $\Psi$ is an irreducible $\mathbb{Q}$-representation of $G$ affording the character $\Omega(\psi)$.
\end{lemma}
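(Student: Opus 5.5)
The plan is to identify $\Q G\psi$-structure directly. Since $N=\ker(\psi)$ is normal and $G/N$ is cyclic of order $n$, generated by $Ny$, every $g\in G$ has a unique expression $g=xy^i$ with $x\in N$ and $0\le i<n$. Then $\psi(xy^i)=\psi(y)^i$. Because $\psi$ is faithful on $G/N$, $\psi(y)$ is a primitive $n$-th root of unity, and we name it $\zeta_n$.

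Let $C$ be the companion matrix displayed in the statement. Its characteristic polynomial is $f(X)$, which is the $n$-th cyclotomic polynomial, so $f(C)=0$. It follows that $f\mid X^n-1$ gives $C^n=I_s$. The rule $\Psi(xy^i)=C^i$ is therefore the composite of three maps:
\[
G\to G/N\cong \Z/n\Z,\qquad 1\mapsto C.
\]
Each is a homomorphism, since $C^n=I$ makes the last map well defined. Hence $\Psi$ is a representation over $\Q$.

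To compute the character, note that $C$ has distinct eigenvalues, namely the roots of $f$. These are $\zeta_n^k$ for $k\in(\Z/n\Z)^\times$. Consequently
\[
\operatorname{Tr}\Psi(xy^i)=\sum_{k}\zeta_n^{ki}=\sum_{\sigma\in\Gal(\Q(\zeta_n)/\Q)}\psi^\sigma(xy^i).
\]
Here $\Q(\psi)=\Q(\zeta_n)$ and $m_\Q(\psi)=1$, since $\psi$ is linear and realized over $\Q(\psi)$. So this sum equals $\Omega(\psi)(xy^i)$.

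For irreducibility, the quickest route is to argue that the $\Q$-algebra generated by $C$ is $\Q[X]/(f)\cong\Q(\zeta_n)$, which is a field. Any $\Psi(G)$-invariant subspace of $\Q^s$ is then a $\Q(\zeta_n)$-submodule of a one-dimensional $\Q(\zeta_n)$-space, because $\Q^s\cong\Q[X]/(f)$ as a module with $C$ acting as multiplication by $X$. Such a subspace must be $0$ or everything. Alternatively, one can note that $\Omega(\psi)$ is a single Galois orbit sum with multiplicity one, so it is the character of an irreducible $\Q$-representation by the facts recalled before Lemma~\ref{SC}.

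The only mildly delicate point is the well-definedness and homomorphism property. The decomposition $G=\bigcup Ny^i$ must be a disjoint union of cosets, and we need $C^n=I$. Both follow from the cyclicity of $G/N$ and the cyclotomic nature of $f$.
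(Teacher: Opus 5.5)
Your argument is correct and complete. The paper gives no proof of this lemma: it is quoted from Yamada \cite[Proposition~1]{Y}. So there is no internal argument to compare against, and your write-up serves as a self-contained verification.

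You check exactly the three things that are needed.
\begin{enumerate}
\item $C$ has characteristic and minimal polynomial $f=\Phi_n$. Hence $C^n=I$, and $xy^i\mapsto C^i$ factors through $G/N\cong\Z/n\Z$.
\item The eigenvalues of $C^i$ are the $\zeta_n^{ki}$ for $k\in(\Z/n\Z)^\times$. This gives $\Omega(\psi)$, because $\Q(\psi)=\Q(\zeta_n)$ and $m_{\Q}(\psi)=1$.
\item $\Q[C]\cong\Q[X]/(f)\cong\Q(\zeta_n)$ is a field. Since $\Psi(G)=\langle C\rangle$, every $\Psi(G)$-stable subspace is a $\Q[C]$-subspace.
\end{enumerate}

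For the last step you do not need the cyclic-vector identification $\Q^s\cong\Q[X]/(f)$. The dimension count is enough: $\Q^s$ is a $\Q(\zeta_n)$-vector space of $\Q$-dimension $s=[\Q(\zeta_n):\Q]$, so it is one-dimensional over $\Q(\zeta_n)$ and has no proper nonzero submodules.

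Of your two irreducibility routes, the first is preferable because it is elementary and self-contained. The second implicitly uses two further facts. One is that a $\Q$-representation in characteristic $0$ is determined up to isomorphism by its character. The other is that $\Omega(\psi)$ is the character of an irreducible $\Q$-representation, which the paper only cites in its introduction (Isaacs, Corollary~10.2).

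A cosmetic point: you announce a composite of ``three maps'' but display only two arrows. The intended maps are $G\to G/N$, then $Ny^i\mapsto i\in\Z/n\Z$, then $i\mapsto C^i$.
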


Let $G$ be a finite group and $H$ a subgroup of $G$. The following lemma describes the relation between $\Omega(\psi)$ and $\Omega(\psi^G)$ for $\psi\in\Irr(H)$.

\begin{lemma} \cite[Proposition 3]{Y}\label{lemma:Yamada}
Let $G$ be a finite group, let $H$ be a subgroup of $G$, and let $\psi\in\irr(H)$ such that $\psi^G\in\Irr(G)$. Then $m_\mathbb{Q}(\psi^G)$ divides $ m_\mathbb{Q}(\psi)[\mathbb{Q}(\psi):\mathbb{Q}(\psi^G)]$.
Moreover, the induced character $\Omega(\psi)^G$ is the character of an irreducible $\mathbb{Q}$-representation of $G$ if and only if
\[
m_\mathbb{Q}(\psi^G)
=
m_\mathbb{Q}(\psi)[\mathbb{Q}(\psi):\mathbb{Q}(\psi^G)].
\]
In this case,
\[
\Omega(\psi)^G=\Omega(\psi^G).
\]
\end{lemma}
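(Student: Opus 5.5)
We prove the two assertions of Lemma~\ref{lemma:Yamada} in turn: first the divisibility, then the criterion for irreducibility of $\Omega(\psi)^G$, which yields the final identity. Write $\chi=\psi^G\in\Irr(G)$, $m=m_\Q(\psi)$ and $k=[\Q(\psi):\Q(\chi)]$. Note that $\Q(\chi)\subseteq\Q(\psi)$, since the values of $\chi$ are given by the induction formula in terms of values of $\psi$.

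First we compute $\Omega(\psi)^G$. For $\sigma\in\Gal(\Q(\psi)/\Q)$ we have $(\psi^\sigma)^G=(\psi^G)^\sigma=\chi^{\sigma|_{\Q(\chi)}}$. The restriction map $\Gal(\Q(\psi)/\Q)\to\Gal(\Q(\chi)/\Q)$ is surjective with fibres of size $k$. Hence
\[
\Omega(\psi)^G=m\sum_{\sigma\in\Gal(\Q(\psi)/\Q)}(\psi^\sigma)^G=mk\sum_{\tau\in\Gal(\Q(\chi)/\Q)}\chi^\tau=\frac{mk}{m_\Q(\chi)}\,\Omega(\chi).
\]

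Next we establish the divisibility. Since $\Omega(\psi)$ is afforded by a $\Q H$-module $V$, the induced module $V^G=\Q G\otimes_{\Q H}V$ is a $\Q G$-module affording $\Omega(\psi)^G$. Decompose $V^G$ into simple $\Q G$-modules. Each simple $\Q G$-module affords some $\Omega(\theta)$, and by the display the only constituent of $\Omega(\psi)^G$ is $\chi$ together with its Galois conjugates. So every simple summand affords $\Omega(\chi)$, and $\Omega(\psi)^G=r\,\Omega(\chi)$ for some positive integer $r$. Comparing with the display gives $mk=r\,m_\Q(\chi)$, so $m_\Q(\chi)$ divides $mk$.

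Finally, $V^G$ is irreducible if and only if $r=1$, that is, if and only if $m_\Q(\chi)=mk$; in that case $\Omega(\psi)^G=\Omega(\chi)$. The only delicate step is justifying that every $\Q G$-character with constituents in a single Galois class is an integer multiple of $\Omega(\chi)$. This follows from the standard fact that the irreducible $\Q$-characters are exactly the $\Omega(\theta)$ and that these are linearly independent, as recalled in the introduction via \cite[Corollary~10.2]{I}.
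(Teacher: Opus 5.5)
Your proof is correct. The paper does not prove this lemma; it quotes it from Yamada \cite[Proposition~3]{Y}, so there is no in-paper argument to compare against.

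What you give is the standard self-contained argument, in two steps:
\begin{enumerate}
\item The class-function identity $\Omega(\psi)^G=\frac{mk}{m_\Q(\chi)}\,\Omega(\chi)$. This is exactly the manipulation the paper performs, with $m=1$, in part~(2) of the proof of Theorem~\ref{thm:principal}.
\item Realizing $\Omega(\psi)^G$ by the induced module $V^G$. Since every simple $\Q G$-module affords some $\Omega(\theta)$, the integer $r=mk/m_\Q(\chi)$ counts simple summands, which gives the divisibility.
\end{enumerate}

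One point deserves an explicit sentence. The lemma asks whether $\Omega(\psi)^G$ is the character of \emph{some} irreducible $\Q$-representation, whereas you test irreducibility of the particular module $V^G$. These agree because $\Q G$-modules are determined up to isomorphism by their characters in characteristic zero. Equivalently, if $\Omega(\theta)=r\,\Omega(\chi)$, then $\theta$ is a Galois conjugate of $\chi$, which forces $r=1$. Your closing remark on linear independence of the $\Omega(\theta)$ already supplies this.
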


Let $G=\mathrm{GL}_2(q)$. If $\chi\in\Irr(G)$ with $\chi(1)=1$, then an irreducible rational matrix representation of $G$ affording the character $\Omega(\chi)$ can be constructed using Lemma~\ref{lemma:YamadaLinear}. We now describe a similar construction of irreducible matrix representations of $\GL_2(q)$ over $\Q$ affording the character $\Omega(\chi)$, where $\chi\in\Irr(G)$ is any of the remaining irreducible complex characters of $\GL_2(q)$ obtained via parabolic induction.

\subsection{The $q$-dimensional irreducible complex representations of $\mathrm{GL}_2(q)$}

Let $\mathbb{F}_q$ be the finite field with $q$ elements, and let $G=\mathrm{GL}_2(q)$. The \emph{projective line} $\mathbb{P}^1(\mathbb{F}_q)$ is the set of $1$-dimensional subspaces of $\mathbb{F}_q^2$, consisting of $q+1$ points. In homogeneous coordinates, $[x:y]$ with $(x,y)\neq (0,0)$ and $[x:y]=[\lambda x:\lambda y]$ for $\lambda\in\mathbb{F}_q^\times$. The group $G$ acts on $\mathbb{P}^1(\mathbb{F}_q)$ via
\[
\begin{pmatrix} a & b \\ c & d \end{pmatrix}\cdot [x:y]
=
[ax+by:cx+dy].
\]

The permutation representation $\pi$ on
$
V=\{f:\mathbb{P}^1(\mathbb{F}_q)\to\mathbb{C}\},
$
defined by $(\pi(g)f)(P)=f(g^{-1}\cdot P)$, decomposes as
$
\mathbf{1}\oplus \operatorname{St},
$
where $\mathbf{1}$ is the trivial representation and
\[
\operatorname{St}
=
\left\{
f\in V:\sum_{P\in\mathbb{P}^1(\mathbb{F}_q)}f(P)=0
\right\}
\]
is the $q$-dimensional \emph{Steinberg representation} of $G$ (see \cite{Steinberg}).

For any multiplicative character $\mu:\mathbb{F}_q^\times\to\mathbb{C}^\times$, the composition
$
\mu\circ\det:G\to\mathbb{C}^\times
$
given by
$
(\mu\circ\det)(g)=\mu(\det g)
$
defines a one-dimensional irreducible character of $G$. Twisting the Steinberg representation by this character yields
$
\operatorname{St}\otimes(\mu\circ\det),
$
which is again irreducible of degree $q$. As $\mu$ ranges over the $q-1$ distinct characters of $\mathbb{F}_q^\times$, these representations are pairwise inequivalent and exhaust all irreducible complex representations of $\mathrm{GL}_2(q)$ of degree $q$.

Observe that the Steinberg representation $\operatorname{St}$ of $\GL_2(q)$ is realizable over $\Q$. Since $(\mu\circ\det)\in\lin(\GL_2(q))$, Lemma~\ref{lemma:YamadaLinear} provides an explicit irreducible rational matrix representation affording the character $\Omega(\mu\circ\det)$. Let $\rho_\mu$ be an irreducible $\Q$-representation of $\GL_2(q)$ affording $\Omega(\mu\circ\det)$. Let $\chi_q^{(n)}$ be an irreducible complex character of $\GL_2(q)$ of degree $q$. Then there exists a multiplicative character $\mu:\mathbb{F}_q^\times\to\mathbb{C}^\times$ such that $\chi_q^{(n)}$ is afforded by $\operatorname{St}\otimes(\mu\circ\det)$. We now prove Theorem~\ref{thm:Steinberg}, showing that $\operatorname{St}\otimes\rho_\mu$ is an irreducible rational matrix representation of $\GL_2(q)$ affording the character $\Omega(\chi_q^{(n)})$.

\begin{theorem}\label{thm:Steinberg}
Let $G=\GL_2(q)$, where $q$ is a prime power. Let $\chi^{(n)}_q \in \irr(G)$ be a character of degree $q$.
Suppose $\mu:\mathbb{F}_q^\times\to\mathbb{C}^\times$ is the multiplicative character such that $\chi^{(n)}_q$ is afforded by $\operatorname{St}\otimes (\mu\circ\det)$.
Let $\rho_\mu$ be an irreducible $\Q$-representation of $G$ affording the character $\Omega(\mu\circ\det)$.
Then $\operatorname{St}\otimes \rho_\mu$ is an irreducible rational representation of $G$ affording the character $\Omega(\chi^{(n)}_q)$.
\end{theorem}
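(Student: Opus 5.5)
The plan is to compute the character of $\operatorname{St}\otimes\rho_\mu$ directly and compare it with $\Omega(\chi^{(n)}_q)$. Irreducibility over $\Q$ then comes for free, because $\Omega(\chi^{(n)}_q)$ is an irreducible rational character: by Lemma~\ref{lemma:schurindexGL} we have $m_\Q(\chi^{(n)}_q)=1$, and the discussion preceding the proof of Theorem~\ref{thm:RationRepsGL2} says it is afforded by an irreducible $\Q$-representation. Two $\Q$-representations with the same character are equivalent, since characteristic is $0$.

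First, $\operatorname{St}$ is realizable over $\Q$. Concretely, take the subspace of $\Q$-valued functions on $\mathbb{P}^1(\mathbb{F}_q)$ with sum zero; it is $G$-stable because $G$ permutes the points. Its character $\chi_{\operatorname{St}}$ is the permutation character minus $1$, so it is rational-valued. Hence $\operatorname{St}\otimes_\Q\rho_\mu$ is a genuine $\Q$-representation of dimension $q\,\varphi(d)$, and its character is $\chi_{\operatorname{St}}\cdot\Omega(\mu\circ\det)$.

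Next, expand $\Omega(\mu\circ\det)=\sum_{\sigma\in\Gal(\Q(\mu\circ\det)/\Q)}(\mu\circ\det)^\sigma$ and use that $\chi_{\operatorname{St}}$ is $\sigma$-invariant. This gives
\[
\chi_{\operatorname{St}}\cdot\Omega(\mu\circ\det)=\sum_{\sigma}\bigl(\chi_{\operatorname{St}}\cdot(\mu\circ\det)\bigr)^\sigma=\sum_{\sigma}\bigl(\chi^{(n)}_q\bigr)^\sigma .
\]
It remains to identify the Galois groups, i.e.\ to show $\Q(\chi^{(n)}_q)=\Q(\mu\circ\det)$. By Lemma~\ref{lemma:CharacterField}, $\Q(\chi^{(n)}_q)=\Q(\varepsilon^n)$. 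The character $\mu\circ\det$ is the linear character $\chi^{(n)}_1$ (compare the table values $\chi^{(n)}_q=\chi_{\operatorname{St}}\cdot\chi^{(n)}_1$), whose field is also $\Q(\varepsilon^n)$. More intrinsically, $\chi^{(n)}_q(A_1(a))=q\,\mu(\det A_1(a))$ and $\chi^{(n)}_q(B_1(a))=-\mu(\det B_1(a))$, with $\det$ surjective onto $\mathbb{F}_q^\times$; this shows $\Q(\mu\circ\det)\subseteq\Q(\chi^{(n)}_q)$, and the reverse inclusion is clear from the product formula. Hence the sum above is exactly $\Omega(\chi^{(n)}_q)$, since $m_\Q=1$.

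The main obstacle is making sure the identification $\chi^{(n)}_q=\chi_{\operatorname{St}}\cdot(\mu\circ\det)$ matches the table's normalization. Concretely, this means checking that $\mu(\det(\cdot))$ agrees with $\chi^{(n)}_1$, including the exponent convention for $B_1(a)$, where $\det B_1(a)=\tau^{a(q+1)}$. I expect this to follow from $\chi_{\operatorname{St}}$ taking the values $q,0,1,-1$ on the four class types, read off from the row $n=0$.
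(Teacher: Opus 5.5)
Your proposal is correct and follows essentially the same route as the paper. It computes the character as $\chi_{\operatorname{St}}\cdot\Omega(\mu\circ\det)$ and uses the rationality of $\chi_{\operatorname{St}}$ to pull the Galois sum outside. It then identifies $\Q(\chi^{(n)}_q)=\Q(\mu\circ\det)$ and deduces irreducibility from the fact that $\Omega(\chi^{(n)}_q)$ is the character of an irreducible $\Q$-representation. Two small remarks on your field identification, which is more explicit than the paper's. First, you should evaluate at an element such as $B_1(1)$ or $A_3(0,1)$, where $\chi_{\operatorname{St}}\neq 0$ and $\det$ is a generator of $\mathbb{F}_q^\times$; $\det$ is not surjective on the $A_1(a)$, which only give squares for odd $q$. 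Second, the normalization worry you flag is moot, since $\chi^{(n)}_q=\chi_{\operatorname{St}}\cdot(\mu\circ\det)$ is part of the hypothesis.
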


\begin{proof}
For any $\sigma\in\Gal(\Q(\mu)/\Q)$, we have
$
(\mu\circ\det)^\sigma=\mu^\sigma\circ\det.
$
Let $\chi_{\operatorname{St}}$ denote the character of the Steinberg representation $\operatorname{St}$ of $G$. Since $\operatorname{St}$ is realizable over $\Q$, we have $\chi_{\operatorname{St}}(g)\in\mathbb{Z}$ for every $g\in G$. Hence, $\chi_{\operatorname{St}}$ is fixed by each $\sigma\in\Gal(\Q(\mu)/\Q)$.

By hypothesis,
$
\chi_q^{(n)}=\chi_{\operatorname{St}}\cdot(\mu\circ\det).
$
Let $\rho_\mu$ be an irreducible $\Q$-representation of $G$ affording the character $\Omega(\mu\circ\det)$. Then the character of $\operatorname{St}\otimes\rho_\mu$ is
\[
\chi_{\operatorname{St}\otimes\rho_\mu}
=
\chi_{\operatorname{St}}\cdot\chi_{\rho_\mu}
=
\chi_{\operatorname{St}}\cdot\Omega(\mu\circ\det)
=
\chi_{\operatorname{St}}\cdot
\sum_{\sigma\in\Gal(\Q(\mu)/\Q)}
(\mu^\sigma\circ\det).
\]
Since $\chi_{\operatorname{St}}$ is $\sigma$-invariant,
\[
\chi_{\operatorname{St}}\cdot(\mu^\sigma\circ\det)
=
(\chi_{\operatorname{St}}\cdot(\mu\circ\det))^\sigma
=
(\chi_q^{(n)})^\sigma.
\]
Therefore, we have
\[
\chi_{\operatorname{St}\otimes\rho_\mu}
=
\sum_{\sigma}(\chi_q^{(n)})^\sigma
=
\Omega(\chi_q^{(n)}).
\]

It remains to show that $\operatorname{St}\otimes\rho_\mu$ is irreducible over $\Q$. Since $\operatorname{St}$ is absolutely irreducible over $\Q$, the Galois conjugates of
$
\chi_q^{(n)}
=
\chi_{\operatorname{St}}\cdot(\mu\circ\det)
$
are precisely
$
\chi_{\operatorname{St}}\cdot(\mu^\sigma\circ\det).
$
Hence, $\Q(\chi_q^{(n)})=\Q(\mu)$. Consequently, $\operatorname{St}\otimes\rho_\mu$ affords the character $\Omega(\chi_q^{(n)})$ and is therefore irreducible over $\Q$. This completes the proof of Theorem~\ref{thm:Steinberg}.
\end{proof}

Let $G=\GL_2(q)$, where $q$ is a prime power, and let $\chi, \psi \in \nl(G)$. Then $\chi \otimes \psi$ is not irreducible. However, this statement does not necessarily hold for irreducible rational characters of $G$. Theorem~\ref{thm:Steinberg} yields the following corollary.

\begin{corollary}\label{cor:tensor}
    Let $G=\GL_2(q)$, where $q$ is a prime power. Then there exist irreducible rational representations $\rho$ and $\theta$ of $G$, each of degree greater than one, such that $\rho \otimes_{\mathbb{Q}} \theta$ is also an irreducible rational representation of $G$.
\end{corollary}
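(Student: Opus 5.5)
The corollary follows by applying Theorem~\ref{thm:Steinberg} with a nontrivial twist. Take $\theta=\operatorname{St}$, realized over $\Q$ as in the discussion preceding Theorem~\ref{thm:Steinberg}. It is absolutely irreducible of degree $q>1$, so it is an irreducible rational representation of degree greater than one.

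Next choose a multiplicative character $\mu:\mathbb{F}_q^\times\to\mathbb{C}^\times$ such that $\Q(\mu)\neq\Q$, i.e., $\mu$ of order $d\mid q-1$ with $\varphi(d)\ge 2$; this is possible as soon as $q-1$ has a divisor $d\notin\{1,2\}$. Let $\rho=\rho_\mu$ be the irreducible $\Q$-representation affording $\Omega(\mu\circ\det)$, for instance the explicit one from Lemma~\ref{lemma:YamadaLinear}. Its degree is $\varphi(d)>1$. By Theorem~\ref{thm:Steinberg}, $\rho\otimes_\Q\theta=\operatorname{St}\otimes\rho_\mu$ is irreducible over $\Q$ and affords $\Omega(\chi_q^{(n)})$, where $\chi_q^{(n)}$ is afforded by $\operatorname{St}\otimes(\mu\circ\det)$. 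This gives the required pair.

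The main obstacle is the small cases in which every divisor $d$ of $q-1$ has $\varphi(d)\le 1$, namely $q-1\in\{1,2\}$, i.e., $q=2,3$. Then every linear character is rational and the construction above yields nothing. I would handle these separately, either directly or by replacing $\rho_\mu$ with another nonlinear irreducible rational representation.

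For $q=3$, $G=\GL_2(3)$ has order $48$. By Lemma~\ref{lemma:galois}, the cuspidal characters $\chi_{2}^{(n)}$ with $d=8$ have $\Q(\chi)=\Q(\zeta_8+\zeta_8^3)=\Q(\sqrt{-2})$, so $\Omega$ of one of them is a rational irreducible representation $\rho$ of degree $4$. I would test $\rho\otimes\theta$ for suitable $\theta$ by computing inner products of rational characters from Table~\ref{rational table-gl(2)}. The character $\chi_{2}^{(n)}\cdot\chi_1^{(1)}$, where $\chi_1^{(1)}$ is the sign of the determinant, is irreducible of degree $2$, but $\chi_1^{(1)}$ has degree one, which is not allowed. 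So one would instead need $\theta$ of degree greater than one whose character is orthogonal enough.

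For $q=2$, $G\cong S_3$, and all irreducible representations are rational of degrees $1,1,2$. Here $2\otimes 2=1+1'+2$ is reducible, so the corollary would fail for $q=2$. I therefore expect the statement to implicitly require $q\ge 4$ (or at least $q-1\notin\{1,2\}$), and I would state this caveat, giving the Steinberg-twist argument as the proof in the generic case.
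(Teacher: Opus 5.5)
Your argument for $q\ge 4$ is the one the paper intends. The paper gives no proof beyond saying that Theorem~\ref{thm:Steinberg} yields the corollary, namely: take $\theta=\operatorname{St}$ and $\rho=\rho_\mu$ with $\mu$ of order $d\ge 3$ (e.g.\ $d=q-1$), so that $\deg\rho_\mu=\varphi(d)>1$. You are right that this silently needs $q\ge 4$. Your $q=2$ computation ($\GL_2(2)\cong S_3$, $\operatorname{St}\otimes\operatorname{St}=\mathbf{1}\oplus\mathrm{sgn}\oplus\operatorname{St}$) is a genuine counterexample to the corollary as stated, which the paper overlooks.

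The one loose end is $q=3$, which you leave open. The search you sketch there cannot succeed, and a degree count settles it.
\begin{itemize}
\item By Theorem~\ref{thm:RationRepsGL2}, the irreducible $\Q$-representations of $\GL_2(3)$ have degrees $1,1$ (part (1)), $3,3$ (part (2)), $2,4$ (part (3), $d=4,8$) and $4$ (part (4), the single orbit of $\{1,0\}$).
\item If $\deg\rho,\deg\theta>1$, then $\deg(\rho\otimes_\Q\theta)\ge 4$. No rational irreducible has degree above $4$, so irreducibility forces $\deg\rho=\deg\theta=2$. Hence $\rho\cong\theta$ is the unique degree-$2$ one, affording the rational cuspidal character $\chi:=\chi^{(2)}_{q-1}$ (here $d=4$ and $\Q(\zeta_4+\zeta_4^{3})=\Q$).
\item Since $\chi$ is real-valued, $\langle\chi^2,1\rangle=\langle\chi,\overline{\chi}\rangle=1$. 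So $\rho\otimes_\Q\rho$ contains the trivial representation and is reducible.
\end{itemize}
Hence the corollary is false for $q\in\{2,3\}$ and true precisely for $q\ge 4$, where your proof applies verbatim.
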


\subsection{The $(q+1)$-dimensional irreducible complex representations of $\mathrm{GL}_2(q)$}
Let $\mathbb{F}_q$ be the finite field with $q$ elements, and let $G=\mathrm{GL}_2(q)$. Let $B$ denote the Borel subgroup of upper triangular matrices,
\[
B=\left\{ \begin{pmatrix} a & b \\ 0 & d \end{pmatrix} : a,d\in\mathbb{F}_q^\times,\; b\in\mathbb{F}_q \right\},
\]
and let $T\cong \mathbb{F}_q^\times\times\mathbb{F}_q^\times$ be the diagonal torus. For multiplicative characters $\mu,\nu:\mathbb{F}_q^\times\to\mathbb{C}^\times$, define a one-dimensional representation of $B$ by
\[
(\mu\otimes\nu)\begin{pmatrix} a & b \\ 0 & d \end{pmatrix}=\mu(a)\nu(d),
\]
and consider the induced representation $\pi_{\mu,\nu}=\operatorname{Ind}_B^G(\mu\otimes\nu)$. Its dimension is $[G:B]=q+1$. If $\mu\neq\nu$, then $\pi_{\mu,\nu}$ is irreducible. Moreover, the character of $\pi_{\mu,\nu}$ is $\chi^{(m,n)}_{q+1}$ in the character table of $G$, where $\mu(\sigma^a)=\varepsilon^{ma}$ and $\nu(\sigma^a)=\varepsilon^{na}$ for a generator $\sigma$ of $\mathbb{F}_q^\times$ and a primitive $(q-1)$-th root of unity $\varepsilon$ (see Table~\ref{table-gl(2)}).

To explicitly construct an irreducible matrix representation of $G$ affording the character $\Omega(\chi^{(m,n)}_{q+1})$ over $\Q$, one starts with the Borel subgroup. Since $\mu\otimes\nu$ is a one-dimensional character of $B$, Lemma~\ref{lemma:YamadaLinear} yields an explicit irreducible $\Q$-representation $\rho_B$ of $B$ with character
\[
\Omega(\mu\otimes\nu)=\sum_{\tau\in\operatorname{Gal}(\mathbb{Q}(\mu\otimes\nu)/\mathbb{Q})}(\mu\otimes\nu)^\tau.
\]
Inducing $\rho_B$ to $G$ gives a rational representation $\Pi_{\mu,\nu}=\operatorname{Ind}_B^G(\rho_B)$ whose character is
\[
\chi_{\Pi_{\mu,\nu}}=(\Omega(\mu\otimes\nu))^G.
\]
Theorem~\ref{thm:principal} gives a necessary and sufficient condition for $\Pi_{\mu,\nu}$ to be irreducible. Furthermore, Theorem~\ref{thm:principal} describes the outcome in two distinct cases. Before proving Theorem~\ref{thm:principal}, we establish Lemma~\ref{lemma:EqualFields}, which is essential for the proof.

\begin{lemma}\label{lemma:EqualFields}
Let $q\ge 3$ be an integer, $N=q-1$, and let $\varepsilon$ be a primitive $N$-th root of unity.  
For integers $0\le n<m\le N-1$, set $\mathbb{L}=\mathbb{Q}(\varepsilon^m,\varepsilon^n)$ and $\mathbb{K}=\mathbb{Q}(\varepsilon^{m+n},\varepsilon^m+\varepsilon^n)$. Define $d=\gcd(m,n,N)$ and $M=\frac{N}{d}$.
Then
\[
\mathbb{L}=\mathbb{K} \quad\Longleftrightarrow\quad 
\left(\frac{m}{d}\right)^2 \not\equiv \left(\frac{n}{d}\right)^2 \pmod M.
\]
\end{lemma}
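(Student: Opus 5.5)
Throughout, we work inside $\mathbb{Q}(\varepsilon)$, whose Galois group is $U=(\mathbb{Z}/N\mathbb{Z})^\times$, with $k\in U$ acting by $\varepsilon\mapsto\varepsilon^k$. We always have $\mathbb{K}\subseteq\mathbb{L}$. By Galois correspondence, $\mathbb{L}=\mathbb{K}$ if and only if every $k\in U$ that fixes $\mathbb{K}$ also fixes $\mathbb{L}$.

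\emph{Step 1: describe the two fixing subgroups.} An element $k$ fixes $\mathbb{L}$ exactly when $km\equiv m$ and $kn\equiv n \pmod N$. Set $x=\varepsilon^m$ and $y=\varepsilon^n$. An element $k$ fixes $\mathbb{K}$ exactly when it fixes $xy$ and $x+y$. Since $x,y$ are the two roots of $T^2-(x+y)T+xy$, this happens exactly when $k$ permutes $\{x,y\}$. So $k$ fixes $\mathbb{K}$ if and only if either $k$ fixes both $x$ and $y$, or $x^k=y$ and $y^k=x$.

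Consequently, $\mathbb{L}\neq\mathbb{K}$ if and only if there exists $k\in U$ with
\[
km\equiv n \pmod N \quad\text{and}\quad kn\equiv m \pmod N,
\]
and with $x\neq y$. Note that $x\neq y$ is automatic, since $0\le n<m\le N-1$ gives $m\not\equiv n \pmod N$.

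\emph{Step 2: reduce to $M$.} Write $m=dm'$ and $n=dn'$, so that $\gcd(m',n',M)=1$. The congruences above are equivalent to $km'\equiv n'$ and $kn'\equiv m' \pmod M$. Reduction $(\mathbb{Z}/N)^\times\to(\mathbb{Z}/M)^\times$ is surjective, so we may equally search for $k\in(\mathbb{Z}/M)^\times$.

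\emph{Step 3: the key equivalence.} We must show that such a unit $k$ exists if and only if $m'^2\equiv n'^2 \pmod M$.

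The forward direction is immediate. From $km'\equiv n'$ and $kn'\equiv m'$ we get $m'^2\equiv km'n'\equiv n'^2$.

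The converse is the main obstacle, because $m'$ and $n'$ need not be units mod $M$; only $\gcd(m',n',M)=1$ holds. I would localise at each prime power $p^e\,\|\,M$ and then glue the local solutions with CRT, which is allowed since a unit mod $M$ is the same as a compatible family of units mod each $p^e$.

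Fix $p$. At least one of $m',n'$ is a $p$-adic unit. Since $m'^2\equiv n'^2 \pmod{p^e}$, the other is then also a unit. Put $k\equiv n'm'^{-1} \pmod{p^e}$. Then $km'\equiv n'$, and
\[
kn'\equiv n'^2m'^{-1}\equiv m'^2m'^{-1}=m' \pmod{p^e},
\]
as required.

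I would double-check that the local solutions really give $k$ coprime to $M$; they do, since each local $k$ is a unit. I would also check the edge case $M=1$. This cannot occur, because $m\not\equiv n \pmod N$ forces $M>1$; in any case the statement there would be vacuous.

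Combining Steps 1–3, $\mathbb{L}\neq\mathbb{K}$ if and only if $(m/d)^2\equiv(n/d)^2 \pmod M$. This is the contrapositive of the claimed equivalence.
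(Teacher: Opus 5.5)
Your proof is correct and follows essentially the same route as the paper. Both identify the nontrivial element of $\mathrm{Gal}(\mathbb{L}/\mathbb{K})$ as the swap $x\leftrightarrow y$, multiply the two congruences for the forward direction, and take $k\equiv n'm'^{-1}\pmod M$ for the converse; the paper works directly in $\mathrm{Gal}(\mathbb{Q}(\zeta_M)/\mathbb{Q})$ after showing $\mathbb{L}=\mathbb{Q}(\varepsilon^d)$, whereas you reduce from $(\mathbb{Z}/N\mathbb{Z})^\times$.

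Your Step 3 supplies a justification the paper omits: it simply asserts $\gcd(m/d,M)=1$, which is false in general but follows from $m'^2\equiv n'^2 \pmod M$ together with $\gcd(m',n',M)=1$, exactly as in your argument. That same argument already shows $m'$ is a unit modulo $M$ outright, so the CRT gluing is unnecessary.
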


\begin{proof}
Set $x=\varepsilon^m$ and $y=\varepsilon^n$.  
Then $\mathbb{L}=\mathbb{Q}(x,y)$ and $\mathbb{K}=\mathbb{Q}(x+y,xy)$. Because $\varepsilon$ has order $N$, the element $\varepsilon^d$ has order $M=N/d$.  
Write
\[
x=\varepsilon^m=(\varepsilon^d)^{m/d}\, \, \, \, \text{and} \, \, \, \,
y=\varepsilon^n=(\varepsilon^d)^{n/d}
\]
with $\gcd(\frac{m}{d}, \frac{n}{d},M)=1$.  
Bezout's identity gives integers $a,b,c$ such that
\[
a\frac{m}{d}+b\frac{n}{d}+cM=1.
\]
Hence, we get
\[
\varepsilon^d = (\varepsilon^m)^a(\varepsilon^n)^b \in \mathbb{L}.
\]
Therefore, $\mathbb{L}=\mathbb{Q}(\varepsilon^d)=\mathbb{Q}(\zeta_M)$, where $\zeta_M=\varepsilon^d$ is a primitive $M$-th root of unity.

Further, note that both $x$ and $y$ satisfy $T^2-(x+y)T+xy=0$, a polynomial with coefficients in $K$. Therefore, we have $[\mathbb{L} : \mathbb{K}]\le 2$. Thus, either $\mathbb{L} = \mathbb{K}$ or $[\mathbb{L} : \mathbb{K}] = 2$.

Assume $[\mathbb{L} : \mathbb{K}] = 2$. Then there exists a non‑trivial automorphism $\sigma\in\operatorname{Gal}(\mathbb{L} / \mathbb{K})$ of order $2$. Since $\sigma$ fixes $x+y$ and $xy$, it permutes the two roots $x,y$ of $T^2-(x+y)T+xy$. Because $\sigma$ is not the identity, we have
\[
\sigma(x)=y\, \, \, \, \text{and} \, \, \, \, \sigma(y)=x.
\]
Every automorphism of $\mathbb{Q}(\zeta_M)$ is of the form $\sigma_a(\zeta_M)=\zeta_M^a$ with $a\in(\mathbb{Z}/M\mathbb{Z})^\times$.  
Using $x=\zeta_M^{m/d}$ and $y=\zeta_M^{n/d}$, the swapping condition becomes
\[
\sigma_a(x)=y \;\Longleftrightarrow\; a\frac{m}{d}\equiv\frac{n}{d}\pmod M,
\]
\[
\sigma_a(y)=x \;\Longleftrightarrow\; a\frac{n}{d}\equiv\frac{m}{d}\pmod M.
\]
Multiplying the first congruence by $n/d$ and the second by $m/d$ yields
\[
a\frac{mn}{d^2}\equiv\Bigl(\frac{n}{d}\Bigr)^2\pmod M \quad \text{and} \quad
a\frac{mn}{d^2}\equiv\Bigl(\frac{m}{d}\Bigr)^2\pmod M.
\]
Hence, we have
\[
\Bigl(\frac{m}{d}\Bigr)^2\equiv\Bigl(\frac{n}{d}\Bigr)^2\pmod M.
\]
  
Conversely, suppose
\[
\Bigl(\frac{m}{d}\Bigr)^2\equiv\Bigl(\frac{n}{d}\Bigr)^2\pmod M.
\]
Because $\gcd(m/d,M)=1$, the residue
\[
a\equiv \frac{n}{d}\Bigl(\frac{m}{d}\Bigr)^{-1}\pmod M
\]
belongs to $(\mathbb{Z}/M\mathbb{Z})^\times$.  
Then clearly $a\frac{m}{d}\equiv\frac{n}{d}\pmod M$.  
Using the assumed congruence,
\[
a\frac{n}{d}\equiv \frac{n}{d}\Bigl(\frac{m}{d}\Bigr)^{-1}\frac{n}{d}
\equiv \frac{m}{d}\pmod M.
\]
Therefore, the automorphism $\sigma_a$ satisfies $\sigma_a(x)=y$ and $\sigma_a(y)=x$.  
It fixes $x+y$ and $xy$, hence fixes $\mathbb{K}$.  
Since $x\neq y$ (because $n<m$ and $0\le n,m\le N-1$), $\sigma_a$ is non‑trivial, so $[\mathbb{L} : \mathbb{K}]=2$ and $\mathbb{L} \neq \mathbb{K}$. This completes the proof of Lemma~\ref{lemma:EqualFields}.
\end{proof}

With the above notation, we are now ready to prove Theorem~\ref{thm:principal}.

\begin{theorem}\label{thm:principal}
Let $G=\mathrm{GL}_2(q)$, where $q$ is a prime power. Let $\mu,\nu:\mathbb{F}_q^\times\to\mathbb{C}^\times$ be multiplicative characters satisfying $\mu(\sigma)=\varepsilon^{m}$ and $\nu(\sigma)=\varepsilon^{n}$ for a generator $\sigma$ of $\mathbb{F}_q^\times$, where $0\le n<m\le q-2$ and $\varepsilon$ is a primitive $(q-1)$-th root of unity. Let $\chi=\chi^{(m,n)}_{q+1}$ be the principal series character afforded by $\pi_{\mu,\nu}=\operatorname{Ind}_B^G(\mu\otimes\nu)$, where $B$ denotes the Borel subgroup of upper triangular matrices. Define $d=\gcd(m,n,q-1)$ and $M=\frac{q-1}{d}$. Let $\rho_B$ be an irreducible $\Q$-representation of $B$ affording the character $\Omega(\mu\otimes\nu)$, and set $\Pi_{\mu,\nu}=\operatorname{Ind}_B^G(\rho_B)$. Then the following hold.
\begin{enumerate}
\item If $\left(\frac{m}{d}\right)^2 \not\equiv \left(\frac{n}{d}\right)^2 \pmod{M}$, then $\Pi_{\mu,\nu}$ is an irreducible $\Q$-representation of $G$ with character $\Omega(\chi)$.

\item If $\left(\frac{m}{d}\right)^2 \equiv \left(\frac{n}{d}\right)^2 \pmod{M}$, then
\[
\Pi_{\mu,\nu}\cong \mathcal{R}\oplus\mathcal{R},
\]
where $\mathcal{R}$ is an irreducible $\Q$-representation of $G$ that affords the character $\Omega(\chi)$. In particular, $\Pi_{\mu,\nu}$ is the direct sum of two isomorphic irreducible $\Q$-representations of $G$ with character $\Omega(\chi)$.
\end{enumerate}
\end{theorem}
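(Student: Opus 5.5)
The plan is to compare the character of $\Pi_{\mu,\nu}$ with $\Omega(\chi)$ by computing $\chi_{\Pi_{\mu,\nu}}=\Omega(\mu\otimes\nu)^G$ as a Galois sum.

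The first step is to identify the field $\Q(\mu\otimes\nu)$. Since $(\mu\otimes\nu)\begin{pmatrix} \sigma^a & b\\ 0&\sigma^c\end{pmatrix}=\varepsilon^{ma+nc}$ and we may take $a,c$ independently, the values generate $\mathbb{L}=\Q(\varepsilon^m,\varepsilon^n)$. By Lemma~\ref{lemma:CharacterField}(3), $\Q(\chi)=\mathbb{K}=\Q(\varepsilon^{m+n},\varepsilon^m+\varepsilon^n)$.

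Next we compute the induced character. Induction commutes with Galois action, $((\mu\otimes\nu)^\tau)^G=(\pi_{\mu,\nu}\text{'s character})^\tau=\chi^\tau$, so
\[
\Omega(\mu\otimes\nu)^G=\sum_{\tau\in\Gal(\mathbb{L}/\Q)}\chi^{\tau|_{\mathbb{K}}}=[\mathbb{L}:\mathbb{K}]\sum_{\sigma\in\Gal(\mathbb{K}/\Q)}\chi^\sigma=[\mathbb{L}:\mathbb{K}]\,\Omega(\chi),
\]
using $m_\Q(\chi)=1$ (Lemma~\ref{lemma:schurindexGL}). Alternatively, this is exactly the content of Lemma~\ref{lemma:Yamada} with $H=B$ and $\psi=\mu\otimes\nu$, where $m_\Q(\psi)=1$ because $\psi$ is linear. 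The equality $\Omega(\psi)^G=\Omega(\psi^G)$, and hence irreducibility, holds iff $1=[\Q(\psi):\Q(\chi)]=[\mathbb{L}:\mathbb{K}]$.

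For case (1), Lemma~\ref{lemma:EqualFields} says the congruence condition $(m/d)^2\not\equiv(n/d)^2\pmod M$ is equivalent to $\mathbb{L}=\mathbb{K}$. Then $\chi_{\Pi_{\mu,\nu}}=\Omega(\chi)$. Since $\Omega(\chi)$ is the character of an irreducible $\Q$-representation and rational representations are determined by their characters in characteristic $0$, $\Pi_{\mu,\nu}$ is irreducible with character $\Omega(\chi)$.

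For case (2), Lemma~\ref{lemma:EqualFields} gives $[\mathbb{L}:\mathbb{K}]=2$ (its proof shows the index is at most $2$ and not $1$), so $\chi_{\Pi_{\mu,\nu}}=2\Omega(\chi)$. Let $\mathcal{R}$ be the irreducible $\Q$-representation affording $\Omega(\chi)$, which exists by the Schur theory recalled before the proof of Theorem~\ref{thm:RationRepsGL2}. Then $\mathcal{R}\oplus\mathcal{R}$ has the same character as $\Pi_{\mu,\nu}$. By semisimplicity of $\Q G$ (Maschke) and linear independence of the irreducible rational characters, $\Pi_{\mu,\nu}\cong\mathcal{R}\oplus\mathcal{R}$.

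The main point needing care is the Galois bookkeeping in the displayed sum. We must check that restriction $\Gal(\mathbb{L}/\Q)\to\Gal(\mathbb{K}/\Q)$ is surjective with fibres of size $[\mathbb{L}:\mathbb{K}]$; this holds because both are subfields of the abelian extension $\Q(\varepsilon)$, so everything is Galois. We must also check $(\psi^\tau)^G=(\psi^G)^\tau$, which is immediate from the induction formula. Once that is in place, the rest reduces directly to Lemma~\ref{lemma:EqualFields}.
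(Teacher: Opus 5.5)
Your proposal is correct and takes essentially the same route as the paper. You identify $\Q(\mu\otimes\nu)=\Q(\varepsilon^m,\varepsilon^n)$, use Lemma~\ref{lemma:EqualFields} to decide whether $[\mathbb{L}:\mathbb{K}]$ is $1$ or $2$, and obtain $\Omega(\mu\otimes\nu)^G=[\mathbb{L}:\mathbb{K}]\,\Omega(\chi)$ from Galois-equivariance of induction together with $m_\Q(\chi)=1$. The only difference is organizational: the paper cites Lemma~\ref{lemma:Yamada} for case (1) and does the Galois-sum computation only in case (2), whereas you derive the formula once and apply it to both cases.
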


\begin{proof}
We prove each part separately.
\begin{enumerate}
    \item Observe that $\Q(\mu\otimes\nu)=\Q(\varepsilon^{m},\varepsilon^{n})$. By Lemma~\ref{lemma:CharacterField},
    \[
    \Q(\chi)=\Q(\varepsilon^{m+n},\varepsilon^m+\varepsilon^n).
    \]
    Since $\left(\frac{m}{d}\right)^2 \not\equiv \left(\frac{n}{d}\right)^2 \pmod{M}$, Lemma~\ref{lemma:EqualFields} yields $\Q(\chi)=\Q(\mu\otimes\nu)$. Moreover, $m_\Q(\chi)=1$ by Lemma~\ref{lemma:schurindexGL}. Therefore, Lemma~\ref{lemma:Yamada} implies that $\Pi_{\mu,\nu}$ is an irreducible $\Q$-representation of $G$ with character $\Omega(\chi)$.

    \item In this case, we have
    \[
    [\Q(\mu\otimes\nu):\Q(\chi)]=2
    \]
    by the proof of Lemma~\ref{lemma:EqualFields}. Since $m_\Q(\chi)=1$ by Lemma~\ref{lemma:schurindexGL}, and induction commutes with Galois automorphisms, we have
    \begin{align*}
       (\mu\otimes\nu)^G=\chi 
       &\implies \sum_{\tau\in\Gal(\Q(\mu\otimes\nu)/\Q)}((\mu\otimes\nu)^G)^\tau
       =\sum_{\tau\in\Gal(\Q(\mu\otimes\nu)/\Q)}\chi^\tau \\
       &\implies (\Omega(\mu\otimes\nu))^G
       =[\Q(\mu\otimes\nu):\Q(\chi)]
       \sum_{\tau\in\Gal(\Q(\chi)/\Q)}\chi^\tau \\
       &\implies \chi_{\Pi_{\mu,\nu}}=2\Omega(\chi).
    \end{align*}
    Thus, $\mathcal{R}$ is an irreducible rational representation of $G$ that affords the character $\Omega(\chi)$. Since $2\Omega(\chi)$ is the character of $\mathcal{R}\oplus\mathcal{R}$, it follows that
    \[
    \Pi_{\mu,\nu}\cong\mathcal{R}\oplus\mathcal{R}.
    \]
\end{enumerate}
This completes the proof of Theorem~\ref{thm:principal}.
\end{proof}

\section{Proof of Theorem~\ref{thm:wedderburn GL}}\label{sec:GroupAlgbera}
In this section, we prove Theorem~\ref{thm:wedderburn GL}. The Artin--Wedderburn theorem asserts that every semisimple ring decomposes as a finite direct product of matrix rings over division rings. Furthermore, by the Brauer--Witt theorem, the Wedderburn components of a rational group algebra are Brauer equivalent to cyclotomic algebras (see \cite{Yam}).

Let $G$ be a finite group and let $\chi \in \Irr(G)$. By Schur theory, there exists an irreducible $\Q$-representation $\rho$ of $G$ affording the character $\Omega(\chi)$. In particular, distinct Galois conjugacy classes correspond to distinct irreducible rational representations of $G$. Reiner~\cite[Theorem~3]{Reiner} further describes the structure of the simple components appearing in the Wedderburn decomposition of the rational group algebra $\Q G$ associated with these rational representations (also see~\cite[Theorem 3.3.1]{JR}). Throughout, by the direct sum of representations we mean the usual block-diagonal direct sum of matrices.

\begin{lemma}\cite[Theorem~3]{Reiner}\label{Reiner}
Let $\mathbb{K}$ be a field of characteristic zero, and let $\mathbb{K}^*$ denote its algebraic closure. Suppose that $T$ is an irreducible $\mathbb{K}$-representation of $G$, extended linearly to a representation of $\mathbb{K} G$. Define
\[
A=\{T(x):x\in \mathbb{K} G\}.
\]
Then $A$ is a simple $\mathbb{K}$-algebra, and hence
\[
A\cong \M_n(D)
\]
for some positive integer $n$ and some division algebra $D$ over $\mathbb{K}$.

Moreover, after extending scalars to $\mathbb{K}^*$, the representation $T$ decomposes as
\[
T\otimes_{\mathbb{K}}\mathbb{K}^*
\cong
m_{\mathbb{K}}(\chi)\bigoplus_{\sigma\in\Gal(\mathbb{K}(\chi)/\mathbb{K})}U^{\sigma},
\]
where $U$ is an irreducible $\mathbb{K}^*$-representation of $G$ affording the character $\chi$, $U^{\sigma}$ affords the character $\chi^{\sigma}$, and $m_{\mathbb{K}}(\chi)$ denotes the Schur index of $\chi$ over $\mathbb{K}$. In this situation,
\[
n=\frac{\chi(1)}{m_{\mathbb{K}}(\chi)},
\]
the center of $D$ satisfies
\[
Z(D)\cong \mathbb{K}(\chi),
\]
and
\[
[D:Z(D)]=\bigl(m_{\mathbb{K}}(\chi)\bigr)^2.
\]
\end{lemma}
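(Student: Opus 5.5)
Since this is Reiner's structure theorem, I would prove it from the Wedderburn theory of the simple $\mathbb{K}G$-module underlying $T$.

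\textbf{Simplicity of $A$.} Let $V$ be the simple $\mathbb{K}G$-module afforded by $T$, and put $E=\operatorname{End}_{\mathbb{K}G}(V)$, a division algebra by Schur's lemma. Since $\mathbb{K}G$ is semisimple (characteristic zero), $A=T(\mathbb{K}G)$ is the image of $\mathbb{K}G$ acting faithfully on a simple module. By the Jacobson density theorem (or directly by Wedderburn--Artin), $A=\operatorname{End}_E(V)\cong \M_n(D)$ with $D=E^{\mathrm{op}}$ and $n=\dim_E V$. Hence $A$ is simple.

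\textbf{Decomposition after extending scalars.} Set $F=Z(D)=Z(A)$, and write $[D:F]=m^2$ (the dimension of a central division algebra is a square). Then
\[
A\otimes_{\mathbb{K}}\mathbb{K}^*\cong \bigoplus_{\iota:F\hookrightarrow \mathbb{K}^*}\M_{nm}(\mathbb{K}^*),
\]
with one factor for each of the $[F:\mathbb{K}]$ embeddings of $F$. Each factor has a unique simple module $U_\iota$, of dimension $nm$, and these modules are pairwise non-isomorphic. Comparing dimensions,
\[
\dim_{\mathbb{K}}V=n\,m^2[F:\mathbb{K}],
\]
and $V\otimes\mathbb{K}^*$ is a faithful $A\otimes\mathbb{K}^*$-module. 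Together these force $V\otimes\mathbb{K}^*\cong m\bigoplus_\iota U_\iota$. The group $\Gal(\mathbb{K}^*/\mathbb{K})$ permutes the embeddings $\iota$ transitively, and so permutes the $U_\iota$ transitively. Fixing one constituent $U$ with character $\chi$, the constituents are therefore exactly the distinct Galois conjugates $U^\sigma$.

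\textbf{Identifying $F$ with $\mathbb{K}(\chi)$.} The centre $F=Z(A)$ is the image of $Z(\mathbb{K}G)$, which is spanned by the class sums $\widehat{C}$. Under the embedding $\iota$ attached to $U$, the element $\widehat{C}$ acts on $U$ by the central character value
\[
\omega_\chi(\widehat{C})=\frac{|C|\,\chi(g_C)}{\chi(1)}.
\]
Hence $\iota(F)=\mathbb{K}(\omega_\chi(\widehat{C}) : C)=\mathbb{K}(\chi)$, using that $\chi(1)$ and $|C|$ are nonzero rationals. It follows that $[F:\mathbb{K}]=[\mathbb{K}(\chi):\mathbb{K}]$, which is consistent with the Galois orbit of $\chi$ having this size, as in Lemma~\ref{SC}.

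\textbf{Identifying $m$ with the Schur index.} Any $\mathbb{K}$-representation whose character is a multiple of $\psi=\sum_\sigma\chi^\sigma$ is a direct sum of copies of $V$, since $V$ is the unique simple $\mathbb{K}G$-module whose character involves $\chi$. Its character is then a multiple of $m\psi$, so the minimal realizable multiple is exactly $m\psi$, giving $m=m_{\mathbb{K}}(\chi)$. Finally $\chi(1)=\dim U=nm$, so $n=\chi(1)/m_{\mathbb{K}}(\chi)$ and $[D:Z(D)]=m_{\mathbb{K}}(\chi)^2$.

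\textbf{Main obstacle.} I expect the most care to be needed in the last two identifications: showing that the central character generates exactly $\mathbb{K}(\chi)$, and matching the multiplicity $m$ with the paper's definition of the Schur index. The remaining steps are standard semisimple algebra.
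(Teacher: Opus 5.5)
The paper gives no proof of this statement. It is quoted from Reiner's Theorem~3 and used only through two consequences: $D=Z(D)=\mathbb{Q}(\chi)$ and $n=\chi(1)$ when $m_{\mathbb{Q}}(\chi)=1$.

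Your proposal supplies the standard self-contained argument:
\begin{itemize}
\item density (or Wedderburn--Artin) gives $A=\operatorname{End}_E(V)\cong \M_n(D)$;
\item $A\otimes_{\mathbb{K}}\mathbb{K}^*$ splits along the $\mathbb{K}$-embeddings of $F=Z(A)$;
\item central characters of class sums give $\iota(F)=\mathbb{K}(\chi)$;
\item a direct comparison with the paper's own definition of $m_{\mathbb{K}}(\chi)$ (the least multiple of $\sum_\sigma\chi^\sigma$ realizable over $\mathbb{K}$) identifies the Schur index.
\end{itemize}
This outline is sound. Compared with the bare citation, it shows why $Z(D)\cong\mathbb{K}(\chi)$, and why $\sqrt{[D:Z(D)]}$ equals the Schur index in exactly the sense the paper defines it. The citation is shorter and covers arbitrary $\mathbb{K}$ with no work.

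\textbf{The step that needs repair.} From $\dim_{\mathbb{K}}V=nm^2[F:\mathbb{K}]$ and faithfulness of $V\otimes\mathbb{K}^*$ you only get $V\otimes\mathbb{K}^*\cong\bigoplus_\iota a_\iota U_\iota$ with all $a_\iota\ge 1$ and $\sum_\iota a_\iota=m[F:\mathbb{K}]$. When $m>1$ and $[F:\mathbb{K}]>1$ this does not force $a_\iota=m$; for example, $(a_\iota)=(1,3)$ is consistent with $m=[F:\mathbb{K}]=2$.

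A one-line fix: since $F$ acts on $V$, write
\[
V\otimes_{\mathbb{K}}\mathbb{K}^*=V\otimes_F(F\otimes_{\mathbb{K}}\mathbb{K}^*)=\bigoplus_\iota V\otimes_{F,\iota}\mathbb{K}^*.
\]
The $\iota$-summand is a module for the $\iota$-th factor $\M_{nm}(\mathbb{K}^*)$ and has $\mathbb{K}^*$-dimension $\dim_F V=nm^2$, so it is $U_\iota^{\oplus m}$. Two alternatives also work:
\begin{itemize}
\item $V\otimes\mathbb{K}^*$ is isomorphic to each of its $\Gal(\mathbb{K}^*/\mathbb{K})$-conjugates, so the $a_\iota$ are constant on the transitive orbit you describe;
\item compare $A\cong V^{\oplus n}$ as a left $A$-module with $A\otimes\mathbb{K}^*\cong\bigoplus_\iota U_\iota^{\oplus nm}$.
\end{itemize}
This matters downstream, because your identification of $m$ with $m_{\mathbb{K}}(\chi)$ uses that the character of $V$ is exactly $m\psi$.

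\textbf{Two smaller points to state explicitly.}
\begin{itemize}
\item Distinct simple $\mathbb{K}G$-modules have disjoint sets of absolutely irreducible constituents, for instance via the primitive central idempotent of $\mathbb{K}G$ attached to $V$. This is what makes $V$ the unique simple module whose character involves $\chi$.
\item Indexing the constituents by $\Gal(\mathbb{K}(\chi)/\mathbb{K})$ requires $\mathbb{K}(\chi)/\mathbb{K}$ to be normal. This holds because $\mathbb{K}(\chi)\subseteq\mathbb{K}(\zeta_{|G|})$, which is abelian over $\mathbb{K}$.
\end{itemize}
With these additions your proof is complete.
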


We now proceed to the proof of Theorem~\ref{thm:wedderburn GL}.

\begin{proof}[Proof of Theorem~\ref{thm:wedderburn GL}]
Let $G=\GL_2(q)$, where $q$ is a prime power, and let $\chi \in \irr(G)$. Suppose that $\rho$ is an irreducible $\Q$-representation of $G$ affording the character $\Omega(\chi)$. Denote by $A_{\Q}(\chi)$ the simple component in the Wedderburn decomposition of $\Q G$ corresponding to $\rho$. Then
\[
A_{\Q}(\chi)\cong \M_n(D)
\]
for some $n\in \mathbb{N}$ and some division algebra $D$. By Lemma~\ref{lemma:schurindexGL}, we have $m_{\Q}(\chi)=1$. Moreover, by Lemma~\ref{Reiner}, $[D:Z(D)]=m_{\Q}(\chi)^2$ and $Z(D)=\Q(\chi)$. Hence, we get
\[
D=Z(D)=\Q(\chi).
\]

Now consider
\[
\rho \cong \bigoplus_{i=1}^{l}\rho_i,
\]
where $l=[\Q(\chi):\Q]$, and each $\rho_i$ is an irreducible complex representation of $G$ affording the character $\chi^{\sigma_i}$ for some $\sigma_i\in \Gal(\Q(\chi)/\Q)$. Since $m_{\Q}(\chi)=1$, Lemma~\ref{Reiner} further implies that $n=\chi(1)$.

Furthermore,
\[
\cd(G)=\{1,q-1,q,q+1\}.
\]

By Theorem~\ref{thm:RationRepsGL2}, for each $d\mid (q-1)$, there exists exactly one, up to isomorphism, simple $\Q G$-module of dimension $\varphi(d)$ affording the character $\Omega(\chi^{(n)}_1)$ for some $n$ satisfying
\[
d=\frac{q-1}{\gcd(n,q-1)}.
\]
Here,
\[
\chi^{(n)}_1(1)=1
\qquad \text{and} \qquad
\Q(\chi^{(n)}_1)=\Q(\zeta_d).
\]
Moreover, these exhaust all irreducible $\Q$-representations of $G$ affording the character $\Omega(\chi)$ with $\chi(1)=1$. Therefore, the direct sum of the simple components in the Wedderburn decomposition of $\mathbb{Q}G$ corresponding to the inequivalent irreducible $\mathbb{Q}$-representations of $G$ affording the character $\Omega(\chi)$, where $\chi(1)=1$, is isomorphic to

\[
\bigoplus_{d\mid q-1}\Q(\zeta_d).
\]

Similarly, by Theorem~\ref{thm:RationRepsGL2}, for each $d\mid (q-1)$, there exists exactly one, up to isomorphism, simple $\Q G$-module of dimension $q\varphi(d)$ affording the character $\Omega(\chi^{(n)}_q)$ for some $n$ satisfying
\[
d=\frac{q-1}{\gcd(n,q-1)}.
\]
In this case,
\[
\chi^{(n)}_q(1)=q
\qquad \text{and} \qquad
\Q(\chi^{(n)}_q)=\Q(\zeta_d).
\]
Moreover, these constitute all irreducible $\Q$-representations of $G$ affording the character $\Omega(\chi)$ with $\chi(1)=q$. Hence, the direct sum of the simple components in the Wedderburn decomposition of $\mathbb{Q}G$ corresponding to the inequivalent irreducible $\mathbb{Q}$-representations of $G$ affording the character $\Omega(\chi)$, where $\chi(1)=q$, is isomorphic to
\[
\bigoplus_{d\mid q-1} \M_q(\Q(\zeta_d)).
\]

Next, by Theorem~\ref{thm:RationRepsGL2}, for each $d\mid (q^2-1)$ with $d\nmid (q-1)$, there exists exactly one, up to isomorphism, simple $\Q G$-module of dimension
\[
\frac{q-1}{2}\varphi(d)
\]
affording the character $\Omega(\chi^{(n)}_{q-1})$ for some $n$ satisfying
\[
d=\frac{q^2-1}{\gcd(n,q^2-1)}.
\]
Moreover, we have
\[
\chi^{(n)}_{q-1}(1)=q-1
\qquad \text{and} \qquad
\Q(\chi^{(n)}_{q-1})
=
\Q(\zeta_d+\zeta_d^{\,q}).
\]
These account for all irreducible $\Q$-representations of $G$ affording the character $\Omega(\chi)$ with $\chi(1)=q-1$. Consequently, the direct sum of the simple components in the Wedderburn decomposition of $\mathbb{Q}G$ corresponding to the inequivalent irreducible $\mathbb{Q}$-representations of $G$ affording the character $\Omega(\chi)$, where $\chi(1)=q-1$, is isomorphic to
\[
\bigoplus_{\substack{d\mid q^2-1 \\ d\nmid q-1}}
\M_{q-1}(\Q(\zeta_d+\zeta_d^{\,q})).
\]

Finally, let
\[
U=(\mathbb{Z}/(q-1)\mathbb{Z})^\times
\]
act on the set $X$ of unordered pairs $(m,n)$ with $0\le n<m\le q-2$ by multiplication. Let $\mathcal{O}$ be a set of representatives for the resulting orbits. For each $(m,n)\in \mathcal{O}$, there exists exactly one, up to isomorphism, simple $\Q G$-module of dimension
\[
(q+1)\,[\Q(\zeta_{q-1}^{\,m+n},\zeta_{q-1}^{\,m}+\zeta_{q-1}^{\,n}):\Q]
\]
affording the character $\Omega(\chi^{(m,n)}_{q+1})$, where $\zeta_{q-1}$ denotes a primitive $(q-1)$-th root of unity in $\mathbb{C}$. In this case,
\[
\chi^{(m,n)}_{q+1}(1)=q+1
\]
and
\[
\Q(\chi^{(m,n)}_{q+1})
=
\Q(\zeta_{q-1}^{\,m+n},\zeta_{q-1}^{\,m}+\zeta_{q-1}^{\,n}).
\]
Moreover, these exhaust all irreducible $\Q$-representations of $G$ affording the character $\Omega(\chi)$ with $\chi(1)=q+1$. Therefore, the direct sum of the simple components in the Wedderburn decomposition of $\mathbb{Q}G$ corresponding to the inequivalent irreducible $\mathbb{Q}$-representations of $G$ affording the character $\Omega(\chi)$, where $\chi(1)=q+1$, is isomorphic to
\[
\bigoplus_{(m,n)\in\mathcal{O}}
\M_{q+1}(\Q(\zeta_{q-1}^{\,m+n},\zeta_{q-1}^{\,m}+\zeta_{q-1}^{\,n})).
\]

Collecting the simple components corresponding to all inequivalent irreducible $\Q$-representations of $G$ associated with $\chi\in \irr(G)$ yields the desired Wedderburn decomposition of $\Q G$. This completes the proof of Theorem~\ref{thm:wedderburn GL}.
\end{proof}

We conclude this section with Example~\ref{examp:ex}, which illustrates the explicit computation of the Wedderburn decomposition of $\Q G$ for $G=\GL_2(q)$ using Theorem~\ref{thm:wedderburn GL}. This decomposition has also been verified using the \textsc{Wedderga} package of {\sf GAP}~\cite{Gap}.

\begin{example}\label{examp:ex}
Let $G=\GL_2(8)$, so that $q=8$. Let $U=(\mathbb{Z}/7\mathbb{Z})^\times$ act on the set $X$ of unordered pairs $(m,n)$ with $0 \le m, n \le 6$ and $m \neq n$ by multiplication modulo $7$. Then
\[
\mathcal{O}=\{(1,0),(2,1),(3,1),(6,1)\}
\]
is a complete set of orbit representatives. Hence, by Theorem~\ref{thm:RationRepsGL2}, we have
\begin{align*}
\Q G \cong {}& \Q \oplus \Q(\zeta_7)
\oplus \M_7(\Q(\zeta_3+\zeta_3^8))
\oplus \M_7(\Q(\zeta_9+\zeta_9^8)) \\
&\oplus \M_7(\Q(\zeta_{21}+\zeta_{21}^8))
\oplus \M_7(\Q(\zeta_{63}+\zeta_{63}^8))
\oplus \M_8(\Q) \\
&\oplus \M_8(\Q(\zeta_7))
\oplus \M_9(\Q(\zeta_7,\zeta_7+1))
\oplus \M_9(\Q(\zeta_7^2,\zeta_7^2+\zeta_7)) \\
&\oplus \M_9(\Q(\zeta_7^3,\zeta_7^3+\zeta_7))
\oplus \M_9(\Q(1,\zeta_7^6+\zeta_7)).
\end{align*}

Note that $\zeta_{21}+\zeta_{21}^8=\zeta_{21}(1+\zeta_{21}^7)=\zeta_{21}(1+\zeta_3)$. Since $1+\zeta_3=e^{\pi i/3}$, we obtain
\[
\zeta_{21}+\zeta_{21}^8
=e^{2\pi i/21}e^{\pi i/3}
=e^{3\pi i/7}
=\zeta_{14}^3.
\]
Hence, $\Q(\zeta_{21}+\zeta_{21}^8)=\Q(\zeta_{14})=\Q(\zeta_7)$, since $\zeta_{14}^2=\zeta_7$. Therefore, we get
\begin{align*}
\Q G \cong {}& \Q \oplus \Q(\zeta_7)
\oplus \M_7(\Q)
\oplus \M_7(\Q(\zeta_9+\zeta_9^8))
\oplus \M_7(\Q(\zeta_7)) \\
&\oplus \M_7(\Q(\zeta_{63}+\zeta_{63}^8))
\oplus \M_8(\Q)
\oplus \M_8(\Q(\zeta_7)) \\
&\oplus 3\M_9(\Q(\zeta_7))
\oplus \M_9(\Q(\zeta_7+\zeta_7^6)).
\end{align*}
This expression agrees with the Wedderburn decomposition of $\Q G$, computed by the \textsc{Wedderga} package of {\sf GAP}~\cite{Gap}.
\end{example}

\printbibliography
 \vspace{2em}
\end{document}